\date{2022/02/20}
\theoremstyle{plain}
\newtheorem{thm}{Theorem}[section]
\newtheorem{cor}[thm]{Corollary}
\newtheorem{lem}[thm]{Lemma}
\newtheorem{prop}[thm]{Proposition}
\newtheorem{conj}[thm]{Conjecture}
\theoremstyle{definition}
\newtheorem{defn}[thm]{Definition}
\newtheorem{example}[thm]{Example}
\newtheorem{rem}[thm]{Remark}
\newcommand{\cA}{\mathcal A}
\newcommand{\cC}{\mathcal C}
\newcommand{\cE}{\mathcal E}
\newcommand{\cO}{\mathcal O}
\newcommand{\cX}{\mathcal X}
\newcommand{\cM}{\mathcal M}
\newcommand{\bP}{\mathbb P}
\newcommand{\bZ}{\mathbb Z}
\newcommand{\bQ}{\mathbb Q}
\newcommand{\bC}{\mathbb C}
\newcommand{\fS}{\mathfrak S}
\DeclareMathOperator{\End}{End}
\DeclareMathOperator{\Gal}{Gal}
\DeclareMathOperator{\CH}{CH}
\DeclareMathOperator{\HS}{HS}
\DeclareMathOperator{\Aut}{Aut}
\DeclareMathOperator{\gr}{gr}
\DeclareMathOperator{\charac}{char}
\DeclareMathOperator{\GL}{GL}
\DeclareMathOperator{\Ortho}{O}
\DeclareMathOperator{\Frac}{Frac}
\DeclareMathOperator{\trace}{tr}
\DeclareMathOperator{\height}{ht}
\DeclareMathOperator{\Fix}{Fix}
\DeclareMathOperator{\Spec}{Spec}
\DeclareMathOperator{\sign}{sgn}
\DeclareMathOperator{\Km}{Km}
\DeclareMathOperator{\ord}{ord}
\newcommand{\id}{\mathrm{id}}
\newcommand{\crys}{\mathrm{crys}}
\newcommand{\Hcrys}{H_\crys}
\newcommand{\Het}{H_{\mathrm{\acute et}}}
\DeclareMathOperator{\Ker}{Ker}
\DeclareMathOperator{\Image}{Im}
\DeclareMathOperator{\NS}{NS}
\newcommand{\divides}{\mid}
\newcommand{\notdivides}{\nmid} %\nmid?
\newcommand{\pairing}[2]{\langle #1,#2 \rangle}
\newcommand{\set}[1]{\{#1\}}
\newcommand{\card}[1]{\lvert #1 \rvert}
\newcommand{\rationalto}{\dashrightarrow}
\newcommand{\injto}{\hookrightarrow}
\newcommand{\surjto}{\twoheadrightarrow}
\newcommand{\isomto}{\stackrel{\sim}{\to}}
\newcommand{\isom}{\cong}
\newcommand{\restrictedto}[1]{\rvert_{#1}}
\newcommand{\rI}{\mathrm I}
\newcommand{\rII}{\mathrm {II}}
\newcommand{\LK}[1]{L_{\mathrm{K#1}}}
\title
[Degeneration of K3 surfaces with automorphisms]
{Degeneration of K3 surfaces with non-symplectic automorphisms}
\author{Yuya Matsumoto}
\address{Department of Mathematics, Faculty of Science and Technology, Tokyo University of Science, 2641 Yamazaki, Noda, Chiba, 278-8510, Japan}
\email{\url{matsumoto.yuya.m@gmail.com}}
\email{\url{matsumoto_yuya@ma.noda.tus.ac.jp}}
\subjclass[2010]{Primary 14J28; Secondary 11G25, 14L30, 14D06, 14J50}
\thanks{This work was supported by JSPS KAKENHI Grant Numbers 15H05738 and 16K17560.}
\begin{document}

\begin{abstract}
We prove that a K3 surface with an automorphism acting on the global $2$-forms by 
a primitive $m$-th root of unity, $m \neq 1,2,3,4,6$,
does not degenerate 
(assuming the existence of the so-called Kulikov models).
A key result used to prove this is
the rationality of the actions of automorphisms
on the graded quotients of the weight filtration of the $l$-adic cohomology groups of the surface.
\end{abstract}

\maketitle

\section{Introduction}

Let $\cO_K$ be a Henselian discrete valuation ring (DVR)
with fraction field $K$ and residue field $k$.
We consider the problem of degeneration of K3 surfaces:
Given a K3 surface $X$ over $K$,
we investigate its possible extensions $\cX$ over $\cO_K$
and their reductions $X_0$ over $k$.

The so-called Kulikov models, which are semistable models of $X$ over $\cO_K$ with nice properties,
is a standard tool to discuss degeneration of K3 surfaces.
The special fibers of Kulikov models are classified into three types: 
Type I, smooth K3 surfaces, and Types II and III, which are reducible surfaces satisfying certain conditions.
It is conjectured that any K3 surface over $K$ admits a Kulikov model after replacing $K$ by a finite extension,
but not yet proved in general.
See Section \ref{sec:Kulikov} for details.

In this paper we relate the properties of Kulikov models with (non-symplectic) automorphisms of $X$:

\begin{thm} \label{thm:main}
Assume $\charac K \neq 2$.
Let $\cX$ be a Kulikov model over $\cO_K$ of a K3 surface $X$ over $K$
and $X_0$ its special fiber over $k$. 
Denote by $m = m(X)$ the order of the image of $\rho \colon \Aut_K(X) \to \GL (H^0(X, \Omega^2_{X/K}))$ (which is finite).

\textup{(1)} Assume $m \neq 1,2,3,4,6$. Then $X_0$ is of Type I, i.e.\ a smooth K3 surface.

\textup{(2)} Assume $m \neq 1,2$. Then $X_0$ is either of Type I or II.
\end{thm}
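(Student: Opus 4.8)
The plan is to convert the hypothesis on $m$ into a bound on the degree of the characteristic polynomial of an automorphism acting on the top graded piece of the monodromy weight filtration of $H^2$, and then to invoke the rationality theorem announced in the abstract. Since $\rho$ takes values in $\GL(H^0(X,\Omega^2_{X/K}))\isom K^\times$, its image is a finite cyclic group, so I fix $g\in\Aut_K(X)$ with $\rho(g)$ a generator; then $g$ acts on the one-dimensional $K$-space $H^0(X,\Omega^2_{X/K})$ by a primitive $m$-th root of unity $\zeta_m\in K^\times$. It is enough to prove: \emph{if $X_0$ is of Type $\mathrm{II}$ then $m\in\{1,2,3,4,6\}$, and if $X_0$ is of Type $\mathrm{III}$ then $m\in\{1,2\}$} --- indeed the two statements together give (1), and the second gives (2).

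Write $V:=\Het^2(X_{\overline K},\bQ_l)$ for an auxiliary prime $l\neq\charac k$. The automorphism $g$ acts $G_K$-equivariantly on $V$, hence compatibly on $\HdR^2(X/K)$ and --- since $\cX/\cO_K$ is semistable --- on the filtered $(\varphi,N)$-module $\Dst(V)$ (or on $\Dpst(V)$, or on the limiting mixed Hodge structure when $\charac K=0$), all comparison isomorphisms being $g$-equivariant. In particular, under $\HdR^2(X/K)\isom\Dst(V)\otimes_{K_0}K$ the line $F^2\HdR^2(X/K)=H^0(X,\Omega^2_{X/K})$ is $g$-stable with $g$ acting by $\zeta_m$. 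The monodromy weight filtration $W_\bullet$ on $V$, centred at $2$ --- equivalently, via the log-crystalline comparison, the monodromy weight filtration on $\Hlogcrys^2(X_0)$ --- is $g$-stable, and the weight spectral sequence computes $\gr^W_\bullet$ from the cohomology of the strata of $X_0$. The classical theory of Kulikov models (see Section~\ref{sec:Kulikov}) then pins down the shape of these pieces: for Type $\mathrm{II}$, the components of $X_0$ are rational or elliptic ruled surfaces and the double curves are elliptic curves, so $\gr^W_2$ is of Tate type while $\gr^W_1$ and $\gr^W_3$ are two-dimensional --- indeed $\gr^W_3\isom\Het^1(E)(-1)$ for the elliptic double curve $E$, consistent with the Hodge-number count on the limit mixed Hodge structure which forces the unique $(2,0)$-class of $H^2$ into $\gr^W_3$ and none into $\gr^W_2$; for Type $\mathrm{III}$, all components of $X_0$ are rational, so $\gr^W_1=\gr^W_3=0$, $\gr^W_2$ is of Tate type, and $\gr^W_0,\gr^W_4$ are one-dimensional.

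Let $w$ denote the top weight occurring: $w=3$ for Type $\mathrm{II}$, $w=4$ for Type $\mathrm{III}$. In both cases $F^2$ vanishes on $\gr^W_k$ for every $k<w$ (those pieces are zero, of Tate type, or of Hodge--Tate weight $1$), so the projection $\Dst(V)\twoheadrightarrow\gr^W_w$ restricts to a $g$-equivariant injection
\[
 F^2\bigl(\Dst(V)\otimes_{K_0}K\bigr)\ \injto\ \gr^W_w\bigl(\Dst(V)\bigr)\otimes_{K_0}K .
\]
Hence $\zeta_m$ is an eigenvalue of $g$ acting on $\gr^W_w\Dst(V)$, equivalently (for $g$-stable subquotients of semistable representations the characteristic polynomial is unchanged upon passing to the $l$-adic realization) on $\gr^W_w V$. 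By the rationality theorem the characteristic polynomial of $g$ on $\gr^W_w V$ lies in $\bZ[T]$ and is monic of degree $\dim\gr^W_w$, which is $2$ for Type $\mathrm{II}$ and $1$ for Type $\mathrm{III}$. Thus $\zeta_m$ is a root of a monic integral polynomial of degree $\le 2$ (resp.\ $=1$), whence $\varphi(m)=[\bQ(\zeta_m):\bQ]\le 2$ (resp.\ $=1$), which forces $m\in\{1,2,3,4,6\}$ (resp.\ $m\in\{1,2\}$). This establishes the two implications, hence the theorem.

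I expect the deduction above to be short once its two inputs are in hand. The first is the rationality theorem for the $W_\bullet$-graded pieces of $\Het^2$ --- the technical heart of the paper --- which is exactly what lets one replace the a priori $l$-adic number $\zeta_m$ by a root of an \emph{integral} polynomial of controlled degree; this is the main obstacle. The second, more routine but still requiring care, is the mixed-characteristic bookkeeping: the $g$-equivariance of the semistable comparison, the identification of the Hodge filtration on $\Dst(V)\otimes_{K_0}K$ with that of $\HdR^2(X/K)$, and the (classical, but to be phrased crystalline-theoretically) determination of the Hodge / Hodge--Tate types of the $\gr^W_k$ for a Kulikov degeneration --- in particular the vanishing of $F^2$ on every piece below the top weight. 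With those in place, the degree count closes the argument.
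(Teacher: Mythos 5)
Your overall strategy agrees with the paper's in its endgame: use the Kulikov classification to compute $\dim\gr^W_n$, show that a root of unity of order (essentially) $m$ occurs as an eigenvalue of an automorphism on the top nonzero graded piece $\gr^W_3$ (Type II) or $\gr^W_4$ (Type III), and then use integrality of the characteristic polynomial there to bound $\phi(m)$ by $2$ or $1$. Where you genuinely diverge is in \emph{how} the eigenvalue $\zeta_m$ is planted in $\gr^W_w$. You push the line $F^2\HdR^2=H^0(X,\Omega^2)$ through the limit mixed Hodge structure (resp.\ $\Dst$), using that $F^2$ kills every graded piece below the top weight. The paper instead observes that $\NS(X_{\overline K})\otimes\bQ_l(-1)\subset\Ker N\subset M_0=W_2$ (the last equality being weight--monodromy for surfaces), so that $\gr^W_3$ and $\gr^W_4$ are quotients of $T_l(X)\otimes\bQ_l$, and then uses the irreducibility of the transcendental Hodge structure to conclude that the characteristic polynomial of $g$ on $T_l(X)$ is a power of $\Phi_{m(g)}$ (resp.\ $\Phi_{m(g)p^e}$ in characteristic $p$); this makes \emph{every} eigenvalue on $\gr^W_w$ a primitive root of unity of the right order, with no appeal to the Hodge filtration of the degeneration. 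In residue characteristic $0$ your limit-MHS argument is sound (the vanishing of $h^{2,0}$ on $\gr^W_2$ does follow from the components being rational or elliptic ruled), and it is an attractive alternative; for Type II it even lets you bypass the elliptic-curve endomorphism algebra of Corollary \ref{cor:action on gr1} and use only the degree-$2$ integrality bound.

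There are, however, two real gaps. First, the theorem allows $\charac K=p>2$, and your argument does not apply there: in equal positive characteristic there is no $\Dst$ and no de Rham--\'etale comparison of the kind you invoke, and the passage from the action on $H^0(X,\Omega^2)$ to eigenvalues on $\ell$-adic cohomology is genuinely delicate --- an automorphism can act on the $2$-form by $\zeta_m$ while acting on $T_l$ by a primitive $mp^e$-th root of unity, and establishing even that much requires lifting to characteristic $0$ (Jang's theorem, valid only for finite height) plus a separate argument (semicontinuity of the height, \`a la Rudakov--Shafarevich--Zink) to dispose of height $\geq 3$ and supersingular surfaces. None of this is addressed. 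Second, even in mixed characteristic your transfer of the eigenvalue from $\gr^W_w\Dst(V)$ to $\gr^W_w$ of the $\ell$-adic realization ($\ell\neq p$), where the rationality theorem actually lives, is asserted in a parenthesis but is not a standard general fact about subquotients of semistable representations; it is an independence-of-$\ell$-including-$p$ statement for the graded pieces of the weight filtration, which Theorem \ref{thm:action on gr} only provides for $\ell\neq p$. It can be patched for these particular pieces ($\gr^W_4$ is combinatorial; $\gr^W_3$ is $H^1$ of an elliptic curve up to twist, where crystalline and $\ell$-adic characteristic polynomials of endomorphisms agree), but as written this step, together with the identification of the monodromy filtration on $\Dst(V)$ with the weight filtration of the weight spectral sequence, is carrying more weight than you acknowledge.
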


The key idea of the proof is describing the action of $\Aut(X)$ on the $n$-th graded quotients $\gr^W_n$ of the weight filtration of $\Het^2(X_{\overline K}, \bQ_l)$
in terms of $\rho$ (Lemma \ref{lem:action on gr}).
We also show the rationality (and $l$-independence) of such action (Theorem \ref{thm:action on gr}),
and using this we derive some restrictions on $\dim \gr^W_n$. 
By calculating $\dim \gr^W_n$ (using the classification of the special fibers of Kulikov models, see Section \ref{sec:Kulikov})
we can exclude certain types of degeneration,
and then the remaining possibilities are as stated in the theorem.

The assumption on $m$ in Theorem \ref{thm:main} is optimal: see Examples \ref{ex:Type III}--\ref{ex:Type II}.

\medskip

In Section \ref{sec:application} we give an application on K3 surfaces with non-symplectic automorphisms of prime order $p \geq 5$:
we can show that the moduli space of such K3 surfaces (in characteristic $0$) is compact
and that any such surface defined over a number field has everywhere potential good reduction.
Since these moduli spaces for $p = 5,7,11$ have positive dimension $4,2,1$ respectively,
there are plenty of such surfaces. 

\medskip

We also have a conjectural generalization of Theorem \ref{thm:main}:
\begin{conj} \label{conj:E-version}
Assume $\charac K = 0$.
Let $X$ and $\cX$ be as in the previous theorem.
Let $E$ be the Hodge endomorphism field of $X$.

\textup{(1)} Assume $E$ is not $\bQ$ nor an imaginary quadratic field. Then $X_0$ is of Type I.

\textup{(2)} Assume $E \neq \bQ$. Then $X_0$ is either of Type I or II.
\end{conj}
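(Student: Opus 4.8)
The plan is to run the argument of Theorem~\ref{thm:main} with the finite cyclic group $\langle\rho(\sigma)\rangle$ replaced by the whole Hodge endomorphism field $E$ acting on the transcendental part of the limit mixed Hodge structure. After a finite extension of $K$ (which changes neither the type of $\cX$ nor $E$) one wants to know that $E$ acts on $\Het^2(X_{\overline K},\bQ_l)$ compatibly with the Galois action: for K3 surfaces the Hodge endomorphisms of $H^2$ are motivated (André), hence have $l$-adic realizations commuting with $G_K$. In particular the $E$-action preserves the transcendental sublattice $T=\NS(X_{\overline K})^{\perp}$, commutes with the monodromy operator $N$, and therefore preserves the weight filtration $W_\bullet$ and acts on each graded quotient $\gr^W_n T$; in the analytic picture attached to the semistable family $\cX\to\Delta$ it acts on the limit MHS and on the polarizable $\bQ$-Hodge structures $\gr^W_n T$, because $E$ is monodromy-invariant and preserves the limit Hodge filtration.

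Next I would locate the relevant graded pieces using the Kulikov classification (Section~\ref{sec:Kulikov}). Since line bundles extend over the semistable model $\cX$, we have $N|_{\NS\otimes\bQ_l}=0$, so $\NS\otimes\bQ_l\hookrightarrow\gr^W_2$ and $\gr^W_n T=\gr^W_n\Het^2$ for $n\neq 2$. If $X_0$ is of Type III then $\gr^W_0 T=\gr^W_0\Het^2$ is $1$-dimensional (the top cohomology of the dual complex, a triangulated $S^2$), and $\gr^W_4 T\cong\gr^W_0 T(-2)$. If $X_0$ is of Type II then $\gr^W_1 T=\gr^W_1\Het^2$ is $2$-dimensional and, by the Clemens–Schmid computation together with the fact that all double curves of a Type II Kulikov model are isomorphic to a single elliptic curve $B$, carries the Hodge (resp.\ Galois) structure of $H^1(B,\bQ)$, while $\gr^W_3 T\cong\gr^W_1 T(-1)$. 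The rationality and $l$-independence of Theorem~\ref{thm:action on gr}, extended to the $E$-action, ensure that these identifications are compatible with $E$ over $\bQ$.

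The conclusions then follow from the fact that $E$ is a field with $1$ acting as the identity, so the homomorphism from $E$ to the endomorphism algebra of $\gr^W_n T$ is injective whenever $\gr^W_n T\neq 0$. If $X_0$ is of Type III, then $E$ embeds into the endomorphism algebra of a rank-$1$ Hodge structure, so $E=\bQ$; this proves part (2) and the Type III case of part (1). If $X_0$ is of Type II, then $E$ embeds into $\End_{\mathrm{HS}}\big(H^1(B,\bQ)\big)=\End(B)\otimes\bQ$, which for an elliptic curve over $\bC$ is either $\bQ$ or an imaginary quadratic field; hence $E$ is $\bQ$ or imaginary quadratic, proving part (1) (the real quadratic case being automatically excluded, since elliptic curves have no real multiplication). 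Combining the two, if $E$ is neither $\bQ$ nor imaginary quadratic then $X_0$ is neither of Type II nor of Type III, i.e.\ it is of Type I.

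The main obstacle—and the reason this remains conjectural—is foundational: one must show that the Hodge endomorphism field genuinely acts on the graded quotients of the limit mixed Hodge structure, i.e.\ that motivated self-correspondences of $X_{\overline K}$ specialize compatibly with the weight filtration of the semistable model. In equal characteristic $0$ this reduces to the monodromy-invariance of $E$ in the associated complex-analytic family, after which the argument above goes through verbatim. In mixed characteristic one additionally needs the Kuga–Satake construction to be compatible with semistable degeneration, both to transport the Hodge-theoretic field $E$ to the $l$-adic (and $p$-adic) realizations and to retain the CM/totally real dichotomy for $E$—which is precisely what excludes a real quadratic $E$ coexisting with a Type II reduction, a point that the $l$-adic picture alone does not settle. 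Establishing the interaction of $E$ with the monodromy $N$ over a $p$-adic base is the genuinely open step.
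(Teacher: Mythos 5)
The statement you are proving is stated in the paper as a \emph{conjecture}: the paper itself establishes it only conditionally, namely under the Hodge conjecture for $X\times X$ (Theorem~\ref{thm:E-version}). Your proposal has the same skeleton as that conditional proof --- Type III forces $E$ into the endomorphisms of the rank-one piece $\gr^W_4$, Type II forces $E$ into $\End(C)_\bQ$ for the elliptic double curve $C$, hence $E$ is $\bQ$ resp.\ $\bQ$ or imaginary quadratic --- but you replace the paper's key input by a different one. The paper assumes the Hodge conjecture precisely so that $E$ is realized by classes in $\CH^2(X\times X)_\bQ$, to which Theorem~\ref{thm:action on gr} and Corollary~\ref{cor:action on gr1} apply; those results rest on Saito's construction, which takes an \emph{algebraic} cycle on the generic fiber and produces cycles $\overline\Gamma^{(p)}$ on the strata of the special fiber, thereby realizing the action on $\gr^W_1,\gr^W_3$ inside $\End(B)\otimes\bQ$ for an abelian variety $B$ and the action on $\gr^W_0,\gr^W_4$ inside an integral structure. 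You propose instead to use the motivated/absolute-Hodge nature of $E$ (Andr\'e). That genuinely changes the argument, and you are right that it does not close the proof; but your diagnosis of where it fails can be sharpened. Preservation of the weight filtration is \emph{not} the issue: by Theorem~\ref{thm:weight-monodromy conjecture} the weight filtration on $\Het^2$ equals the monodromy filtration, which is defined purely Galois-theoretically, so any endomorphism commuting with an open subgroup of $G_K$ (as motivated classes do, after a finite extension) automatically preserves $W_\bullet$ and acts on $\gr^W_n$. The genuine gap is the finer structure on the graded pieces: there is no motivated analogue of Saito's specialization of correspondences to the special fiber, so you cannot conclude that the action on $\gr^W_3$ lies in $\End(C)_\bQ$. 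Without that, in Type II you only get $E\otimes\bQ_l\hookrightarrow\End_{\bQ_l}(\gr^W_3)\cong M_2(\bQ_l)$, which does not exclude a real quadratic $E$ --- exactly the case your own last paragraph worries about, and exactly what Corollary~\ref{cor:action on gr1} is there to rule out.

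Two smaller points. First, in Type III your step ``$E$ embeds into the endomorphisms of a rank-one Hodge structure, hence $E=\bQ$'' presupposes an $E$-action on the rational limit MHS, which is part of the unproved compatibility; the $l$-adic statement only gives a ring embedding $E\hookrightarrow\bQ_l$, which for a single $l$ does not force $E=\bQ$. However, since motivated classes have compatible realizations for \emph{every} $l\neq\charac k$, you get $E\hookrightarrow\bQ_l$ for all such $l$, and a number field $E\neq\bQ$ fails to embed in $\bQ_l$ for a positive-density set of $l$; so the Type III half (and hence part~(2)) can be made unconditional by varying $l$ --- this is worth making explicit rather than routing through the limit MHS. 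Second, your appeal to Kuga--Satake ``to retain the CM/totally real dichotomy'' is a red herring: that dichotomy is Zarhin's theorem about the Hodge structure $T(X_\bC)_\bQ$ itself and needs no compatibility with degeneration. What you actually need in Type II is the identification of $\gr^W_3$ with $\Het^1$ of an elliptic curve \emph{as a module over the specialized correspondence}, and for that the only available mechanism in the paper is algebraicity of the $E$-action, i.e.\ the Hodge conjecture for $X\times X$ (known when $E$ is CM, by Ram\'on Mar\'{\i}, which is why the paper can make sense of the statement at all).
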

\begin{thm} \label{thm:E-version}
If the Hodge conjecture for $X \times X$ is true,
then Conjecture \ref{conj:E-version} for $X$ is true.
\end{thm}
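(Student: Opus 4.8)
The plan is to upgrade the proof of Theorem~\ref{thm:main} by replacing the cyclic group generated by a non-symplectic automorphism with the Hodge endomorphism field $E$, the Hodge conjecture being what makes the $E$-action algebraic (hence available $l$-adically and on the special fiber). Write $T(X)$ for the transcendental lattice; then $E = \End_\Hdg(T(X)_\bQ)$ is a field by Zarhin's theorem, and each $e \in E$, extended by $0$ on $\NS(X)_\bQ$, gives a Hodge class in $H^2(X) \otimes H^2(X)^\vee \subseteq H^4(X\times X)(2)$. By the Hodge conjecture for $X \times X$ these classes are algebraic; since the cycle class map is a ring homomorphism compatible with composition of correspondences and with the comparison to $l$-adic cohomology, this realizes $E$ by a $\bQ$-subalgebra of $\CH^2(X \times X)_\bQ$ (modulo homological equivalence) whose elements $\Gamma_e$ act on $\Het^2(X_{\overline K}, \bQ_l)$ exactly as $e$ acts on $T(X)_{\bQ_l}$ (and by $0$ on $\NS(X)_{\bQ_l}$). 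A priori the $\Gamma_e$ are defined only over a finite extension $K'/K$; but the type of the special fiber of a Kulikov model is determined by the monodromy operator $N$ (Type I $\Leftrightarrow N=0$, Type II $\Leftrightarrow N \ne 0 = N^2$, Type III $\Leftrightarrow N^2 \ne 0$), and $N$ is only multiplied by the ramification index under base change, so this type is unchanged by passing to $K'$. Hence I may assume the $\Gamma_e$ are defined over $K$; taking Zariski closures in $\cX \times_{\cO_K} \cX$, they act on $\Het^2(X_{\overline K}, \bQ_l)$ commuting with Galois, hence with $N$, hence preserving the monodromy weight filtration $W_\bullet$ and inducing an action of $E$ on each $\gr^W_n$.

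Next I would run the arguments of Lemma~\ref{lem:action on gr} and Theorem~\ref{thm:action on gr} with $\Gamma_e$ in place of an automorphism $\sigma$ — the point being that those proofs use only that the operator is an algebraic correspondence over $\cO_K$ commuting with monodromy, not that it is invertible or of degree one. This yields, first, that the characteristic polynomial of $e$ on each $\gr^W_n$ lies in $\bQ[t]$ and is independent of $l$; and second, the analogue of Lemma~\ref{lem:action on gr}: if $X_0$ is of Type III then $e$ acts on the one-dimensional space $\gr^W_4 \Het^2(X_{\overline K},\bQ_l)$ by $\iota(e)$, and if $X_0$ is of Type II then $e$ acts on the two-dimensional space $\gr^W_3$ (of Hodge type $(2,1)+(1,2)$) with eigenvalues $\iota(e)$ and $\overline{\iota(e)}$, where $\iota\colon E \hookrightarrow \bC$ is the embedding determined by the action of $E$ on $H^0(X, \Omega^2_{X/K})$ (the dimensions being as computed in Section~\ref{sec:Kulikov}). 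One further observation is needed to see that these $E$-actions are \emph{unital}: the classes in $\NS(X_{\overline K})_{\bQ_l}$ are monodromy-invariant of weight $2$, hence lie in $W_2$, so the projector $1 \in E$ (which equals $\id - p_{\NS}$ on $\Het^2$) induces the identity on $\gr^W_n$ for every $n \ne 2$.

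Now I would conclude by case analysis. If $X_0$ is of Type III, then $E$ acts on the one-dimensional $\bQ_l$-space $\gr^W_4$ by a unital ring homomorphism $E \to \bQ_l$, whose image is rational by the rationality statement; since a field admitting a unital ring homomorphism to $\bQ$ must equal $\bQ$, we get $E = \bQ$. If $X_0$ is of Type II, then for every $e \in E$ the characteristic polynomial $(t - \iota(e))(t - \overline{\iota(e)})$ of $e$ on $\gr^W_3$ lies in $\bQ[t]$, so every element of $E$ has degree $\le 2$ over $\bQ$, hence $[E:\bQ] \le 2$; moreover if $\iota$ is real then $\iota(e) = \overline{\iota(e)} \in \bQ$ for all $e$ and again $E = \bQ$, so that the only remaining possibility besides $\bQ$ is that $E$ is imaginary quadratic. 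Therefore: if $E$ is neither $\bQ$ nor imaginary quadratic, $X_0$ must be of Type I, which is Conjecture~\ref{conj:E-version}(1); and if merely $E \ne \bQ$, then $X_0$ cannot be of Type III, which is Conjecture~\ref{conj:E-version}(2).

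The main obstacle is the passage from automorphisms to correspondences: one must verify that the proofs of Lemma~\ref{lem:action on gr} (the Hodge-theoretic identification of the action on the top Hodge piece of $\gr^W_n$ via the limit Hodge filtration) and of Theorem~\ref{thm:action on gr} (rationality and $l$-independence) really do go through for the $\Gamma_e$ and not merely for invertible degree-one self-maps; if those results are stated only for automorphisms, reproving them for algebraic correspondences is the substantive work. A secondary, more routine point is to confirm that replacing $K$ by a finite extension in order to trivialize the field of definition of the $\Gamma_e$ does not change the type of the special fiber, which follows from the monodromy description recalled in the first paragraph.
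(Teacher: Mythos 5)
Your overall framework is the paper's: use the Hodge conjecture to realize $E$ inside $\CH^2(X\times X)_\bQ$, observe that $\gr^W_n$ ($n=3$ for Type II, $n=4$ for Type III) is a nonzero quotient of $T_l(X)\otimes\bQ_l$ on which these correspondences act, and invoke the rationality statement of Theorem \ref{thm:action on gr} to constrain $E$. Your Type III argument (a field admitting a unital homomorphism to $\bQ$ must equal $\bQ$) is exactly the paper's. The points about fields of definition and base change are real but routine, as you say; the weight filtration, hence $\dim \gr^W_n$ and so the type of the special fiber, is insensitive to finite extensions of $K$.

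The gap is in the Type II case. Rationality of the characteristic polynomial of $e$ on the two-dimensional space $\gr^W_3$ gives $[E:\bQ]\le 2$ but cannot by itself exclude a real quadratic $E$: for such an $E$, $\gr^W_3$ would be free of rank one over $E\otimes\bQ_l$ and the characteristic polynomial of $e$ would simply be its minimal polynomial over $\bQ$, which is rational --- no contradiction. Your exclusion of this case therefore rests entirely on the claim that the two eigenvalues of $e$ on $\gr^W_3$ are the complex conjugates $\iota(e)$ and $\overline{\iota(e)}$, which you justify only by appeal to the limit Hodge filtration. That identification is nowhere established in your argument (and it is not what Lemma \ref{lem:action on gr} provides --- that lemma only uses that $\gr^W_n$ is a quotient of $T_l(X)\otimes\bQ_l$); making it rigorous would require a limit mixed Hodge structure functorial for correspondences, which is not available in the stated generality where the residue field may have positive characteristic. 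The paper avoids this entirely: Corollary \ref{cor:action on gr1} shows that the action on $\gr^W_3$ factors through $\End(C)_\bQ$ for the elliptic curve $C$ occurring as a double curve of $X_{\overline 0}$, and a commutative subfield of $\End(C)_\bQ$ is either $\bQ$ or an imaginary quadratic field. Replacing your eigenvalue argument by this one closes the gap; the rest of your proposal then goes through.
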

It is known that the Hodge conjecture for $X \times X$ is true if $E$ is a CM field.
See Section \ref{sec:proof} for the definition of the Hodge endomorphism field. It is either a totally real field or a CM field.
\begin{rem}
The Hodge endomorphism field of $X$ clearly contains $\bQ(\zeta_{m})$, where $m = m(X)$.
The cyclotomic field $\bQ(\zeta_m)$ is a CM field for $m \neq 1,2$
and is imaginary quadratic only if $m = 3,4,6$. 
Hence Theorem \ref{thm:E-version} generalizes Theorem \ref{thm:main}.
\end{rem}

\medskip

\subsection*{Acknowledgments}
I am grateful to Yuji Odaka
for the discussion from which this study originated (see Remark \ref{rem:orientation}).
I thank Simon Brandhorst, Kazuhiro Ito, Tetsushi Ito, Teruhisa Koshikawa, Yukiyoshi Nakkajima, and Takeshi Saito for helpful comments and discussions.
I thank the anonymous referee for helpful suggestions.

\section{Transcendental lattices and $2$-forms of K3 surfaces}

In this section $X$ is a K3 surface over a field $k$
and $l$ is an arbitrary prime different from $\charac k$.

Let $\rho \colon \Aut_k(X) \to \GL (H^0(X, \Omega^2_{X/k})) = k^*$ be the natural action
(we have $\dim H^0(X, \Omega^2_{X/k}) = 1$ since $X$ is a K3 surface).
An element of $\Aut_k(X)$ is called \emph{symplectic} if it belongs to $\Ker \rho$.
\begin{lem} \label{lem:rho}
$\Image \rho$ is a finite (cyclic) group.
\end{lem}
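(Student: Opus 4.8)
The plan is to show that $\Image \rho$ is finite by realizing it inside a finite group attached to the étale cohomology of $X$. The natural idea is that an automorphism acting on $H^0(X,\Omega^2)$ by a scalar $\lambda$ also acts on $\Het^2(X_{\overline F},\bQ_l)$, and the scalar $\lambda$ shows up as an eigenvalue of that action; since the cohomology is finite-dimensional and the characteristic polynomials have bounded degree with coefficients constrained by compatibility with the lattice structure, only finitely many such $\lambda$ can occur.

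More concretely, first I would reduce to the case where $F$ is algebraically closed (or at least separably closed): base-changing to $\overline F$ only enlarges the group $\Aut(X)$ and the target $F^*$, so finiteness over $\overline F$ implies finiteness over $F$. Next, I would pass to $H^2$: the de Rham (or crystalline, or via a comparison the $l$-adic) cohomology $H^2$ carries a natural action of $\Aut(X)$, and $H^0(X,\Omega^2_{X/F})$ sits inside $H^2_{\dR}(X/F)$ as the lowest piece of the Hodge filtration $F^2$, which is $\Aut(X)$-stable. Hence if $g \in \Aut(X)$ acts on $H^0(\Omega^2)$ by $\rho(g) = \lambda$, then $\lambda$ is an eigenvalue of $g^*$ on $H^2$. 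The key point is that $g^*$ preserves the $\bZ$-lattice $\Het^2(X_{\overline F},\bZ_l)$ (or, in characteristic zero, the singular cohomology lattice $H^2(X,\bZ)$ with its cup-product form), so the characteristic polynomial of $g^*$ on $H^2$ is a monic integer polynomial of degree $22$; its roots are algebraic integers, and by compatibility with the (perfect, unimodular) intersection pairing they are algebraic \emph{units} whose archimedean absolute values are controlled. In fact $\lambda$ is a root of unity: on the transcendental part $g^*$ preserves a positive-definite form (after restricting to a suitable real subspace, or using the polarization), forcing all eigenvalues to have absolute value $1$, and being an algebraic integer of absolute value $1$ with all conjugates of absolute value $1$, Kronecker's theorem gives that $\lambda$ is a root of unity. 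Since $\lambda$ is a root of unity of degree dividing $22$ over $\bQ$, there are only finitely many possibilities, so $\Image \rho$ is finite; being a subgroup of $F^*$, it is cyclic.

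The main obstacle — and the place to be careful — is the positivity/absolute-value input that upgrades ``eigenvalue of an integral matrix'' to ``root of unity.'' In characteristic zero this is classical (it is the standard argument that a K3 surface has finite group of symplectic-quotient scalars, using that $\Aut(X)$ preserves the ample cone or the positive-definite transcendental lattice up to the Hodge structure). In positive characteristic one must instead work with $l$-adic cohomology and either invoke the fact that $g^*$ has finite order on $\NS(X)\otimes\bQ_l$ and on the discriminant, plus Weil-type bounds, or cite the known description of $\Aut(X)$ acting on $\Het^2$ — in particular that the eigenvalue on the period must be a root of unity because $g$ has finite order on an appropriate sub-Hodge structure or because the lattice-automorphism group $\Ortho(\Het^2)$ has the property that any element preserving a suitable cone is of finite order. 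A cleaner route, if available in the literature the paper cites, is simply: $g^*$ acts on the lattice $\Het^2(X_{\overline F},\bZ_l(1))$ preserving the intersection form, hence lies in an arithmetic subgroup of $\Ortho$, and the scalar $\rho(g)$ is its eigenvalue on the ``holomorphic'' line, which must be a root of unity; then quote the degree bound. I would present the characteristic-zero argument in full and handle positive characteristic by the $l$-adic analogue, flagging the one lemma (eigenvalues are roots of unity) that needs either Kronecker's theorem after establishing $|\lambda|=1$ at all places, or a citation.
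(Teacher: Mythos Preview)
Your approach differs from the paper's: the paper does not argue directly but cites literature in three separate cases --- Ueno and Nikulin in characteristic $0$, Nygaard's bound $m(g) \mid p^{\sigma_0}+1$ in the supersingular case, and Lieblich--Maulik lifting to characteristic $0$ in the finite-height case. Your characteristic-$0$ sketch is essentially right, with one imprecision: the form on $T(X)$ is not positive-definite but has signature $(2, \rank T - 2)$. The argument still works because $g^*$ preserves the Hodge decomposition and hence the orthogonal splitting of $T(X)_\bR$ into the positive-definite $2$-plane $(H^{2,0} \oplus H^{0,2})_\bR$ and its negative-definite complement; an isometry preserving such a splitting has all eigenvalues on the unit circle, and since the Galois conjugates of $\lambda$ are among the eigenvalues on $T(X)_\bC$ (the characteristic polynomial of $g^*$ on $T(X)_\bQ$ being rational), Kronecker applies.

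In positive characteristic, however, there is a genuine gap. The scalar $\rho(g)$ lives in $F^*$, not in $\bQ_l^*$, so it is \emph{not} an eigenvalue of $g^*$ on $\Het^2(X_{\overline F}, \bQ_l)$; there is no ``holomorphic line'' in $l$-adic cohomology. None of your proposed fixes work: Weil bounds concern Frobenius, not geometric automorphisms; there is no Hodge structure on $\Het^2$ in characteristic $p$; and isometries preserving the ample cone need not have finite order (automorphisms of K3 surfaces can have Salem-number eigenvalues on $H^2$). A correct direct route does exist --- $\rho(g)$ is an eigenvalue of $g^*$ on $H^2_\dR(X/\overline F)$, hence a root of the mod-$p$ reduction of the characteristic polynomial on $\Hcrys^2$, which lies in $\bZ[x]$ by comparison with $\Het^2$ and $l$-independence, so $\rho(g)$ is algebraic over $\bF_p$ of degree $\leq 22$ and thus a root of unity of bounded order --- but this is not the argument you gave, and the paper itself instead relies on Nygaard and on lifting.
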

We denote by $m(X)$ (resp.\ $m(g)$) the order of the group $\Image \rho$ (resp.\ of the element $\rho(g)$).
We denote by $\mu_m$ the group of $m$-th roots of $1$.
\begin{proof}
Characteristic $0$: 
Finiteness follows from a general result of Ueno \cite{Ueno:classification}*{Theorem 14.10}.
Nikulin \cite{Nikulin:factorgroups}*{Theorem 10.1.2} also showed finiteness and moreover showed $\phi(m(X)) \leq 20$,
where $\phi(m) = \# (\bZ/m\bZ)^*$ 
is the number of invertible classes modulo $m$.
(In particular we have $m(X) \leq 66$.)

Characteristic $p > 0$, supersingular: Nygaard \cite{Nygaard:higherdeRham-Witt}*{Theorem 2.1} (see Remark \ref{rem:Nygaard p=2}) showed that
$\rho(g) \in \mu_{p^{\sigma_0} + 1}$ for every $g$, where $\sigma_0$ is the Artin invariant of $X$ (which is a positive integer $\leq 10$). 
Hence $m(g)$ and $m(X)$ divide $p^{\sigma_0} + 1$. 

Characteristic $p > 0$, finite height: 
Let $W = W(\overline k)$ be the ring of Witt vectors over $\overline k$
and let $K = \Frac W$.
Lieblich--Maulik \cite{Lieblich--Maulik:cone}*{Corollary 4.2} showed that
there exists a lifting $\tilde X$ over $W$
such that the specialization morphism $\NS(\tilde X_{\overline K}) \to \NS(\tilde X_{\overline k})$ is an isomorphism.
They also showed \cite{Lieblich--Maulik:cone}*{Theorem 2.1} that for such $\tilde X$ the restriction map 
$\Aut(\tilde X) \to \Aut(X_K)$ is an isomorphism,
and the same assertion holds for $\tilde X_R := \tilde X \otimes R$ for any finite extension $R$ of $W$.
They also showed \cite{Lieblich--Maulik:cone}*{Section 6} that for such $\tilde X$ the specialization map 
$\Aut(\tilde X_{\overline K}) \to \Aut(X_{\overline k})$,
defined as the limit of $\Aut(\tilde X_{\Frac R}) \stackrel{\sim}{\leftarrow} \Aut(\tilde X_R) \to \Aut(X_{\overline k})$,
has finite cokernel.
Comparing the actions on a 2-form on $\tilde X_R$ and its mod $p$ reduction,
we observe that this specialization map is compatible with $\rho$.
The assertion is reduced to the characteristic $0$ case.
\end{proof}

\begin{rem} \label{rem:Nygaard p=2}
We cited a theorem of Nygaard \cite{Nygaard:higherdeRham-Witt}*{Theorem 2.1}, which is stated for $p \neq 2$.
We show that this is still true if $p = 2$: this argument is due to Kazuhiro Ito.
The only step where the assumption $p = 2$ is used is 
the inductive step of \cite{Ogus:K3crystals}*{Lemma 3.14}. 
If $p = 2$, we can argue as follows.
If there exists $x \in \Gamma$ with $p \notdivides \pairing{x}{x}$ then we argue as in \cite{Ogus:K3crystals}.
Suppose there is no such $x$. There are still $x_1,x_2 \in \Gamma$ with $p \notdivides \pairing{x_1}{x_2}$, 
and then the matrix $(\pairing{x_i}{x_j})_{i,j=1}^2$ is invertible (since $p \divides \pairing{x_i}{x_i}$),
hence $\Gamma$ decomposes to the sum of the subspace generated by $(x_1,x_2)$ and its orthogonal complement. Apply the induction hypothesis to the complement.
\end{rem}

We recall the transcendental lattices of K3 surfaces.
Let $T_l(X)$ be the orthogonal complement of $\NS(X_{\overline k}) \otimes \bZ_l(-1)$ in $\Het^2(X_{\overline k}, \bZ_l)$
and denote by $\chi_l \colon \Aut(X) \to \GL (T_l(X))$ the natural action.
If $k = \bC$ we define $T(X) \subset H^2(X, \bZ)$ and $\chi \colon \Aut(X) \to \GL (T(X))$ similarly.
If $\charac k > 0$ 
we define $T_\crys(X) \subset \Hcrys^2(X_{\overline{k}}/W(\overline k))$ and $\chi_\crys \colon \Aut(X) \to \GL (T_\crys(X))$ similarly,
where $W(\overline k)$ is the ring of Witt vectors over $\overline k$.

In characteristic $0$ the following relationship between $\chi$ and $\rho$ is well-known
(for example see \cite{Schutt:dynamicsssK3}*{Remark 3.4}).
We include the proof for the reader's convenience.
\begin{lem} \label{lem:chi:0}
Assume $\charac k = 0$.
Then the characteristic polynomial of $\chi_l(g)$ belongs to $\bZ[x]$ and is independent of $l$.
It is a power of the $m(g)$-th cyclotomic polynomial $\Phi_{m(g)}$.
If $k = \bC$ then it is equal to that of $\chi(g)$.
\end{lem}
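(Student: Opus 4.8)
The plan is to reduce to the case $F=\bC$ and then argue by Hodge theory. Both $X$ and $g$ are defined over a subfield $F_0\subseteq F$ finitely generated over $\bQ$, and the pair $(T_l(X),\chi_l(g))$ does not change under extension of the algebraically closed base field: by invariance of \'etale cohomology, $\Het^2(X_{\overline F},\bZ_l)$ with its cup-product pairing and $g$-action is unchanged, and the sublattice $\NS\otimes\bZ_l(-1)$ is unchanged too, since $\NS(X_{\overline{F_0}})\to\NS(X_{\overline F})$ is an isomorphism (any divisor is defined over a finitely generated subextension, hence specializes to one over $\overline{F_0}$). Fixing an embedding $\overline{F_0}\hookrightarrow\bC$, we may therefore assume $F=\bC$. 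The Artin comparison isomorphism $\Het^2(X_\bC,\bZ_l)\cong H^2(X(\bC),\bZ)\otimes_\bZ\bZ_l$ is $g$-equivariant and compatible with cycle classes, so it carries $\NS(X)\otimes\bZ_l(-1)$ onto $\NS(X)\otimes\bZ_l$ and hence $T_l(X)$ onto $T(X)\otimes_\bZ\bZ_l$ (the Tate twist does not affect the action of a geometric automorphism). Thus the characteristic polynomial of $\chi_l(g)$ equals that of $\chi(g)$; the latter lies in $\bZ[x]$ because $\chi(g)$ preserves the lattice $T(X)$, and it is manifestly independent of $l$. This gives every assertion except that the polynomial is a power of $\Phi_{m(g)}$.

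For that remaining point I would use that $V:=T(X)\otimes\bQ$ is an irreducible weight-$2$ rational Hodge structure: in a decomposition $V=A\oplus B$ into Hodge structures, one summand, say $B$, has $B^{2,0}=0$ because $V^{2,0}=H^0(X,\Omega^2_X)$ is one-dimensional; then $B$ is purely of type $(1,1)$, so $B\subseteq\NS(X)\otimes\bQ$ by the Lefschetz $(1,1)$ theorem, forcing $B=0$. Since $\chi(g)$ acts on $V$ by a morphism of Hodge structures, so does every element of $\bQ[\chi(g)]$. Writing the characteristic polynomial of $\chi(g)$ as $\prod_i\psi_i^{a_i}$ with the $\psi_i\in\bQ[x]$ distinct monic irreducible, the primary decomposition $V=\bigoplus_i\ker\psi_i(\chi(g))^{a_i}$ is then a decomposition into rational sub-Hodge structures, each summand being the kernel of a morphism of Hodge structures. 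Irreducibility leaves a single factor, so the characteristic polynomial is $\psi_1^{a_1}$ with $\psi_1$ irreducible. Finally the holomorphic $2$-form $\omega$ spans $H^0(X,\Omega^2_X)=H^{2,0}(X)\subseteq V\otimes\bC$, and $\chi(g)\omega=\rho(g)\omega$ with $\rho(g)$ a primitive $m(g)$-th root of unity; hence $\rho(g)$ is a root of the irreducible polynomial $\psi_1$, so $\psi_1=\Phi_{m(g)}$ and the characteristic polynomial of $\chi_l(g)$ is $\Phi_{m(g)}^{a_1}$.

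Essentially every ingredient is classical, so I do not anticipate a real obstacle; the two points that need a little care are the invariance claims used to reduce to $\bC$ --- that $\Het^2(X_{\overline F},\bZ_l)$ and the sublattice $\NS(X_{\overline F})\otimes\bZ_l(-1)$, together with their $g$-actions, really are unaffected by the field extensions in question --- and the irreducibility of the transcendental Hodge structure, which is standard (proved as above, or available in the literature on K3 surfaces).
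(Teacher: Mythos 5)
Your proof is correct and follows essentially the same route as the paper's: reduce to $F=\bC$, use the Betti--\'etale comparison to get integrality and $l$-independence, and then combine the irreducibility of $T(X)_\bQ$ as a rational Hodge structure (via $h^{2,0}=1$ and Lefschetz $(1,1)$) with the fact that $\rho(g)$ is an eigenvalue on $H^{2,0}$. You merely spell out two steps the paper leaves implicit --- the spreading-out/embedding argument justifying the reduction to $\bC$ and the primary decomposition into sub-Hodge structures --- both of which are handled correctly.
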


\begin{proof}
We may assume $k = \bC$.
By the comparison of Betti and \'etale cohomology groups,
$\chi(g)$ and $\chi_l(g)$ have the same characteristic polynomial $P \in \bZ[x]$.
Since $H^0(X, \Omega^2) \subset T(X) \otimes \bC$ (by Hodge decomposition),
$P$ has $\rho(g)$ as a root, and hence is divisible by $\Phi_{m(g)}$.
Note that $T(X)_\bQ$ is irreducible as a rational Hodge structure 
(if it admits a decomposition $T(X)_{\bQ} = T_1 \oplus T_2$, then since $\dim (T(X)_{\bC})^{2,0} = h^{2,0}(X) = 1$ there exists $i \in \set{1,2}$ for which $T_i \subset H^{1,1}$,
but then by the Lefschetz $(1,1)$-theorem we have $T_i \subset H^{1,1} \cap H^2(X, \bQ) = \NS(X)_{\bQ}$, hence $T_i = 0$).
Hence $P$ has no other irreducible factor in $\bZ[x]$.
\end{proof}

We also need a positive characteristic version.
We say that $g \in \Aut(X)$ in positive characteristic is \emph{liftable to characteristic $0$}
if
there exists a pair $(\tilde X, \tilde g)$ of a proper smooth scheme $\tilde X$ over a DVR $V$ that is finite over $W(\overline k)$ 
and an automorphism $\tilde g \in \Aut(\tilde X)$
satisfying $(\tilde X, \tilde g) \otimes_V \overline k = (X,g) \otimes_k \overline k$.
(By \cite{Deligne:relevement}*{proof of Corollaire 1.10}, the generic fiber of $\tilde X$ is then automatically a K3 surface.)

\begin{lem} \label{lem:chi:p}
Assume $\charac k = p > 0$.

\textup{(1)} The characteristic polynomial $P$ of $\chi_l(g)$ belongs to $\bZ[x]$, is independent of $l$,
and is equal to that of $\chi_\crys(g)$.

\textup{(2)} If $g$ is liftable to characteristic $0$,
then 
$P$ is a power of $\Phi_{m(g) p^e}$
for some integer $e \geq 0$.

\textup{(3)} If $p > 2$ and $X$ is of finite height (we no longer assume liftability),
then $P$ is a product of cyclotomic polynomials of the form $\Phi_{m(g) p^{e_i}}$ for some integers $e_i \geq 0$. 
%
%\textup{(4)} & INCORRECT. 20210701
%In the situation of \textup{(3)}, assume moreover $X$ is defined over a finite field.
%Then $P$ is a power of $\Phi_{m(g) p^e}$
%for some integer $e \geq 0$.
\end{lem}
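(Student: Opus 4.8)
The plan is to dispatch the four parts in order; (1) and (2) are formal, and the substance is in (3)--(4).

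\emph{Part (1).} The graph of $g$ is an algebraic correspondence on $X_{\overline F}$, so the characteristic polynomial of $g$ on $\Het^i(X_{\overline F},\bQ_l)$ is an integer polynomial, independent of $l$, and equal to that of $g$ on $\Hcrys^i(X_{\overline F}/W(\overline F))$, by the standard $l$-independence for correspondences: spread $(X,g)$ out over a scheme of finite type over $\bF_p$ with structure morphism $\pi$, note that the characteristic polynomial of $g$ on $R^i\pi_*\bQ_l$ is locally constant on the base, and apply Katz--Messing at a closed point with finite residue field; in degree $2$ this is the whole story since $g$ acts trivially on $\Het^0$ and $\Het^4$ while $\Het^1=\Het^3=0$. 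Preservation of the lattice $\Het^2(X_{\overline F},\bZ_l)$ puts this monic polynomial in $\bZ_l[x]$ for every $l$, hence in $\bZ[x]$. As $g$ acts on $\NS(X_{\overline F})$ by an integral matrix, independent of $l$ and of the cohomology theory, and in each realization $H^2$ is the orthogonal direct sum of $\NS(X_{\overline F})(-1)$ and the transcendental part, dividing out the $\NS$-factor yields $P$: monic, rational, a divisor of a monic integral polynomial and so in $\bZ[x]$ by Gauss's lemma, $l$-independent, and equal to that of $\chi_\crys(g)$.

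\emph{Part (2).} Fix a lift $(\tilde X,\tilde g)$ over a DVR $V$ finite over $W(\overline F)$, with $K=\Frac V$ and $\tilde X_{\overline K}$ a K3 surface. For $l\neq p$, smooth proper base change gives a $g$-equivariant isomorphism $\Het^2(X_{\overline F},\bQ_l)\isom\Het^2(\tilde X_{\overline K},\bQ_l)$ under which $\NS(\tilde X_{\overline K})$ maps injectively into $\NS(X_{\overline F})$; hence $T_l(X)$ is a subrepresentation of $T_l(\tilde X_{\overline K})$, so $P$ divides the characteristic polynomial of $\chi_l(\tilde g_{\overline K})$, which by Lemma~\ref{lem:chi:0} is a power of the $\bQ$-irreducible polynomial $\Phi_{m(\tilde g_{\overline K})}$; therefore $P$ is itself a power of $\Phi_{m(\tilde g_{\overline K})}$. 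Finally $\tilde g$ acts on the free rank-one $V$-module $H^0(\tilde X,\Omega^2_{\tilde X/V})$ by a root of unity of order $m(\tilde g_{\overline K})$ (finite by Lemma~\ref{lem:rho}) whose reduction modulo $\fm_V$ has order the prime-to-$p$ part of $m(\tilde g_{\overline K})$ and equals $\rho(g)$; thus $m(\tilde g_{\overline K})=m(g)p^e$ with $p^e$ its $p$-part, and Part (1) transports this to $\chi_\crys$.

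\emph{Parts (3) and (4).} Since $g$ itself need not lift, I would invoke the Lieblich--Maulik results recalled in the proof of Lemma~\ref{lem:rho}: for $X$ of finite height there is a lift $\tilde X$ over $W(\overline F)$ along which $\NS$ specializes \emph{isomorphically}, with $\Aut(\tilde X_{\overline K})\to\Aut(X_{\overline F})$ of finite cokernel and every automorphism of the generic fiber defined after a finite base change. Then $g^N$ is the specialization of some $\tilde h\in\Aut(\tilde X_{\overline K})$ for a suitable $N\ge1$, and since crystalline cohomology of the special fiber together with its automorphism is computed by the de Rham cohomology of the lift over $W(\overline F)$, the argument of (2) --- now giving an equality of transcendental parts --- shows that $\chi_\crys(g)^N=\chi_\crys(g^N)$ is semisimple with characteristic polynomial a power of $\Phi_{m(g^N)p^e}$; an invertible operator whose $N$-th power is semisimple is itself semisimple, so $\chi_\crys(g)$ is semisimple of finite order and $P=\prod_i\Phi_{d_i}^{k_i}$. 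To identify the $d_i$ I would use that $\chi_\crys(g)$ commutes with crystalline Frobenius and so preserves the slope decomposition of $T_\crys(X)\otimes\Frac W(\overline F)$, whose slopes for $X$ of finite height $h$ are among $1-1/h$, $1$, $1+1/h$: on the two isoclinic summands of slope $1\mp 1/h$ --- simple isocrystals with endomorphism algebra a division algebra over $\bQ_p$ --- $\chi_\crys(g)$ is a single root of unity, so its characteristic polynomial there is a power of that root's $\bQ_p$-minimal polynomial (with mutually inverse roots on the two summands, by duality). Combined with the rationality of $P$ (Part (1)) and the elementary fact that a $\bQ$-irreducible polynomial whose $\bQ_p$-irreducible factors all divide $\Phi_d$ must equal $\Phi_d$, this forces the slope-$(1\mp 1/h)$ part of $P$ to be a power of a single $\Phi_{d'}$; and since the $2$-form is transcendental and reduces into one of these isoclinic summands, $\rho(g)$ (of order $m(g)$) is the reduction of an eigenvalue there, so $d'=m(g)p^e$. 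For (4) the Tate conjecture for K3 surfaces over a finite field --- available since $p>2$ --- forces the slope-$1$ part of $T_\crys(X)$ to vanish, so $P$ is exactly this power of $\Phi_{m(g)p^e}$; for (3) one cannot dispose of a slope-$1$ transcendental summand so cheaply, but by specializing $(X,g)$ to a model over a finite field (where that summand becomes algebraic) one reduces to case (4), and the possibly distinct exponents $e_i$ --- whence the weaker ``product of $\Phi_{m(g)p^{e_i}}$'' --- are precisely what one loses in this reduction. The genuinely delicate point is this crystalline bookkeeping in (3)--(4): transporting semisimplicity and the precise cyclotomic shape from the characteristic-zero lift through the \'etale/crystalline/de Rham comparisons, and handling the slope-$1$ transcendental part in (3) without the Tate conjecture; parts (1) and (2) are routine given $l$-independence of correspondences and Lemma~\ref{lem:chi:0}.
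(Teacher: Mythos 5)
Your parts (1) and (2) are essentially the paper's own argument: (1) is the standard $l$-independence for correspondences plus splitting off the $\NS$-part, and (2) is exactly the paper's comparison of the actions on the $2$-form of the lift and of its reduction (giving $m(\tilde g)=m(g)p^e$) combined with the equivariant injection $T_l(X)\injto T_l(\tilde X_K)$ and Lemma \ref{lem:chi:0}. The divergence, and the problems, are in (3) and (4). For (3) the paper does not use Lieblich--Maulik at all: it replaces $g$ by $g^{p^N}$ (which does not change $m(g)$, since $\rho(g)$ has order prime to $p$) so that $\chi_\crys(g^{p^N})$ has order prime to $p$, invokes Jang's liftability criterion to lift $g^{p^N}$ itself, applies (2), and then reads off the eigenvalues of $\chi_l(g)$ as $p^N$-th roots of primitive $m(g)p^e$-th roots of unity. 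Your route lifts only an uncontrolled power $g^N$, which loses $m(g)$ whenever $\gcd(N,m(g))>1$; your slope analysis recovers the correct cyclotomic shape only on the slope $1\mp 1/h$ summands, and your proposed rescue for the slope-$1$ part of $T$ --- specialize to a finite field and quote (4) --- both inverts the paper's logical order ((4) is deduced from (3), not the other way around) and fails on its own terms: under specialization $\NS$ jumps, so $T_l(X_s)\subsetneq T_l(X_\eta)$, and the eigenvalues of $g$ on the complementary piece $(\NS(X_s)/\NS(X_\eta))\otimes\bQ_l$ are roots of unity of \emph{a priori} arbitrary order, so controlling $P$ on $T_l(X_s)$ does not control $P$ on $T_l(X_\eta)$.

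The decisive error is in (4): it is simply false that the Tate conjecture forces the slope-$1$ part of $T_\crys(X)$ to vanish. For an ordinary K3 over $\bF_q$ of geometric Picard number $2$, the slope-$1$ summand of $\Hcrys^2$ has dimension $20$ and meets $T_\crys$ in an $18$-dimensional piece. What Tate gives is only that no Frobenius eigenvalue on $T_l$ is $q$ times a root of unity. The paper's actual argument is built precisely to exploit this weaker statement: given a coprime factorization $P=P_1P_2$, the induced orthogonal decomposition $T_1\oplus T_2$ (up to isogeny) must place the two irreducible isocrystals $H^2_{<1}$ and $H^2_{>1}$ in the \emph{same} factor, because they pair nontrivially while $T_1\perp T_2$; the other factor then lies in the slope-$1$ part, carries a $W(k)$-linear $F^a$ with rational characteristic polynomial whose roots are forced (by the Weil bounds and the slope condition) to be $q$ times roots of unity, and is therefore algebraic by Tate --- contradicting $T_2\subset\NS^{\perp}$ unless $T_2=0$. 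So $P$ has a single irreducible factor, identified as $\Phi_{m(g)p^e}$ by (3). This orthogonality/irreducibility step is the missing idea in your proposal; without it neither your (3) nor your (4) closes.
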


\begin{proof}
We may assume $k$ is algebraically closed.

(1)
This follows from the corresponding assertions for the actions 
on $\Het^2$ and $\Hcrys^2$ (showed in \cite{Illusie:report}*{3.7.3})
and on their subspaces generated by $\NS$ (clear).

(2)
Let $(\tilde X, \tilde g)$ be a lifting over $V$
and let $K = \Frac V$.
Comparing the actions of $\tilde g$ on a 2-form on $\tilde X$ and its mod $p$ reduction,
we observe that $\rho(\tilde g \rvert_{\tilde X_K})$ maps to $\rho(g)$
under the map $\mu_{m}(K) \to \mu_{m}(\overline k)$, where $m = m(\tilde g \rvert_{\tilde X_K})$.
Since the kernel of this map is precisely the elements whose order is a power of $p$,
we obtain $m(\tilde g \rvert_{\tilde X_K}) = m(g) p^e$ for some $e \geq 0$.
Since $\NS(\tilde X_{\overline {K}}) \injto \NS(X_{\overline k})$,
we have an equivariant injection $T_l(X) \injto T_l(\tilde X_{K})$,
and the assertion follows from Lemma \ref{lem:chi:0}.

(3)
The images of $\chi_l$ and $\chi_\crys$ are finite 
(this can be showed by reducing to characteristic $0$ as in the proof of Lemma \ref{lem:rho}).
By replacing $g$ with its ${p^N}$-th power for some $N$
we may assume
that the order of $\chi_\crys(g)$ is prime to $p$;
this does not change $m(g)$ because $p \notdivides m(g)$ (since there are no primitive $p$-th roots of $1$ in $k$).
By \cite{Jang:lifting}*{Theorem 3.2}, an automorphism of a K3 surface of finite height in characteristic $p > 2$ with this property
is liftable to characteristic $0$.
Hence the assertion is reduced to (2).
%
%(4) % INCORRECT. 20210701
%Assume $X$ and $g$ are defined over $k = \bF_q$, $q = p^a$.
%By extending $k$ we may assume that $\NS(X_{\overline k}) = \NS(X)$.
%Consider the $F$-isocrystal $H^2 = \Hcrys^2(X/K)$, where $K = \Frac W(k)$, 
%and its $F$-subisocrystal $T = (\NS(X) \otimes K)^\perp$
%(so $T \otimes_{K} \Frac W(\overline k) = T_\crys(X) \otimes_{W(\overline k)} \Frac W(\overline k)$).
%Let $H^2_{<1}$, $H^2_{=1}$, and $H^2_{>1}$ be the $F$-subisocrystals of $T$ with indicated slopes.
%We first show that $T \cap H^2_{=1} = 0$.
%The $a$-th iterate $F^a$ of $F$ is $K$-linear (not only semilinear).
%The eigenvalues of $F^a$ on $H^2_{=1}$ are all $q$ times a roots of unity (since $H^2_{=1}$ is of constant slope).
%By extending $k$ we may assume they are all $q$.
%Then by Tate conjecture $H^2_{=1} \subset \NS(X) \otimes K$. Hence $T \cap H^2_{=1} = 0$.
%
%Take a decomposition $P = P_1 P_2$ into coprime polynomials $P_1, P_2$.
%We shall show $P_1 = 1$ or $P_2 = 1$.
%Let $T_i = T^{P_i(g) = 0}$. Then we have an orthogonal decomposition $T = T_1 \oplus T_2$ into $F$-subisocrystals.
%Then, since $H^2_{<1}$ and $H^2_{>1}$ are irreducible, 
%$H^2_{<1}$ is contained in $T_1$ or $T_2$.
%We may assume $H^2_{<1} \subset T_1$.
%Since $T_2 = T_1^{\perp} \subset (H^2_{<1})^{\perp}$, 
%$H^2_{>1}$ is also contained in $T_1$.
%Hence $T_2 \subset H^2_{=1}$.
%Since $T \cap H^2_{=1} = 0$, we have $T_2 = 0$ and hence $P_2 = 1$.
\end{proof}

In general $e$ and $e_i$ in (2),(3) may be nonzero: see Example \ref{ex:nonzero}.
We do not know whether $P$ in (3) can have more than one different factors.

\section{Action of correspondences on the weight spectral sequence} \label{sec:wss}

Let $l$ be an arbitrary prime different from $\charac k$.
In this section we study the actions 
of automorphisms, and more generally of algebraic correspondences,
on the $l$-adic cohomology groups of varieties $X$ over the fraction field $K$ of a Henselian DVR $\cO_K$.
We show that they act on the graded quotients $\gr^W_n$ of the weight filtration 
and that for certain $n$ these actions are rational,
i.e.\ their characteristic polynomials have coefficients in $\bQ$.

In this paper, we call an algebraic space $\cX$ over $\cO_K$ 
to be a \emph{strictly semistable model} of its generic fiber $X$
if it is regular and flat over $\cO_K$, 
its generic fiber $X$ over $K$ is a smooth \emph{scheme},
and its special fiber $X_0$ over $k$ is a simple normal crossing divisor that is a \emph{scheme}.
($\cX$ itself is not assumed to be a scheme.)

We review the following results on the \emph{weight spectral sequence}.

\begin{thm}
Let $\cX$ be a strictly semistable model over $\cO_K$ of a variety $X$.
Then one can attach to $\cX$ a spectral sequence
\[
E_1^{p,q} 
= \bigoplus_{i \geq \max\{0,-p\}} \Het^{q-2i} (X_{\overline 0}^{(p+2i)}, \bQ_l(-i))
\Rightarrow \Het^{p+q}({X}_{\overline K}, \bQ_l) 
,\]
where $X_{\overline 0}^{(p)}$ are the disjoint unions of $(p+1)$-fold intersections 
of the irreducible components of $X_{\overline 0} := X_0 \otimes_k \overline{k}$.
The spectral sequence is compatible with automorphisms of $\cX$.
The spectral sequence degenerates at $E_2$.
\end{thm}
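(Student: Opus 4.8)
The statement being proved is a standard theorem on the weight spectral sequence attached to a strictly semistable model, so my plan is to assemble it from the literature rather than reprove everything from scratch. The plan is to recall the construction (due to Rapoport--Zink, with later treatments by Saito, Nakkajima, and others) of the spectral sequence via the nearby cycles complex $R\psi\bQ_l$ on $X_{\overline 0}$, whose hypercohomology computes $\Het^{p+q}(X_{\overline K},\bQ_l)$, equipped with its monodromy weight filtration. First I would introduce the log-structure or, equivalently, the filtered complex on $X_{\overline 0}$ given by $R\psi\bQ_l$, and identify its graded pieces with shifted constant sheaves on the strata $X_{\overline 0}^{(p)}$; pushing forward along the closed immersions $X_{\overline 0}^{(p)}\hookrightarrow X_{\overline 0}$ and taking the associated spectral sequence of the filtration yields exactly the $E_1$-page displayed, with the Tate twists $\bQ_l(-i)$ arising from the $i$-fold iteration of the local monodromy. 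I would cite the appropriate reference (e.g.\ Rapoport--Zink, or Saito ``Weight spectral sequences and independence of $\ell$'') for the precise form of the $E_1$-terms and differentials.

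Next I would address compatibility with automorphisms. Since an automorphism $\phi$ of $\cX$ over $\cO_K$ preserves the special fiber $X_0$, permutes its irreducible components, and hence induces automorphisms of each $X_{\overline 0}^{(p)}$ compatible with the face maps, and since $R\psi$ is functorial in the model, $\phi$ acts on the entire filtered nearby-cycles complex and therefore on every page of the spectral sequence, compatibly with all differentials; this is a formal consequence of functoriality of nearby cycles and of the monodromy filtration, which is canonical. I would state this as an immediate corollary of the construction.

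Finally, for the degeneration at $E_2$: this is the deepest input, and it is where I expect to simply quote rather than reprove. In the case $\charac K = 0$ it follows from the theory of mixed Hodge structures / the purity of the graded pieces (Steenbrink, Rapoport--Zink); in the $l$-adic setting and in mixed characteristic it follows from Weil II-type weight arguments once one knows the monodromy-weight conjecture in the relevant cases, or, in the cases actually needed here (relative dimension $2$, i.e.\ surfaces), from the explicit analysis of Rapoport--Zink and Nakkajima, where $E_2$-degeneration for semistable surfaces is known unconditionally. The main obstacle is thus not in the construction or the functoriality — both are formal — but in invoking the correct degeneration statement with hypotheses matching our setting (a Henselian DVR with arbitrary residue field, $\cX$ only an algebraic space); here I would point to the fact that, étale-locally, $\cX$ is a scheme so that nearby cycles and the weight spectral sequence are defined and behave as in the scheme case, and cite the semistable-surface results directly. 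I would conclude by remarking that for our applications we only ever use this theorem for $\cX$ a Kulikov model, where $X$ is a K3 surface and $\charac K \neq 2$, so that all required inputs are available.
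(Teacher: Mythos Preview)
Your approach---assemble the theorem from the standard references on nearby cycles and the weight spectral sequence---is exactly what the paper does; there is no independent argument given. The paper cites Rapoport--Zink and Saito for the construction in the scheme case, notes that compatibility with automorphisms is visible from Saito's proof, cites Nakayama for $E_2$-degeneration, and then cites the author's own earlier paper for the passage to algebraic spaces (rather than sketching an \'etale-local reduction as you do).

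One point in your write-up should be cleaned up: you conflate $E_2$-degeneration with the weight-monodromy conjecture and hedge by retreating to surfaces. These are distinct statements. $E_2$-degeneration is proved by Nakayama unconditionally, in arbitrary relative dimension and arbitrary residue characteristic, as a consequence of Deligne's purity (Weil~II); it does not require the weight-monodromy conjecture and is not restricted to surfaces. The weight-monodromy conjecture is the further assertion that the filtration so obtained coincides with the monodromy filtration; that is what is only known for surfaces (Rapoport--Zink), and the paper invokes it separately, later, as its Theorem on $M_r\Het^2 = W_{r+2}\Het^2$. Keep the two inputs apart.
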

Under the assumption that $\cX$ is a scheme,
the first assertion is proved by Rapoport--Zink \cite{Rapoport--Zink:monodromie}*{Satz 2.10} and also by Saito \cite{Saito:weightSS}*{Corollary 2.8},
and $E_2$-degeneration is proved by Nakayama \cite{Nakayama:degeneration}*{Proposition 1.9}.
Compatibility with automorphisms follows from the proof of Saito.
In \cite{Matsumoto:goodreductionK3}*{Proposition 2.3} we removed the assumption that $\cX$ is a scheme.

The filtration induced by this spectral sequence is called the \emph{weight filtration} and denoted $W_n \Het^i(X_{\overline K}, \bQ_l)$.
This filtration is independent of the choice of a proper strictly semistable model $\cX$ 
(use the argument in \cite{Ito:weight-monodromy-equal}*{Section 2.3}).
We let $\gr^W_n := \gr^W_n \Het^i = W_n \Het^i / W_{n-1} \Het^i$.

More generally,
we can define the weight filtration on $\Het^i(X_{\overline K}, \bQ_l)$ in the following way
without assuming the existence of a proper strictly semistable model of $X$.
It is known \cite{deJong:alteration}*{Theorem 6.5} that, after replacing $\cO_K$ by a finite extension of its completion, there exists an alteration $Y$ of $X$
(i.e.\ a proper surjective generically-finite morphism $f \colon Y \to X$ from a variety $Y$)
that admits a proper strictly semistable model that is a scheme.
The composite $f_* \circ f^* \colon \Het^i(X_{\overline K}, \bQ_l) \to \Het^i(Y_{\overline K}, \bQ_l) \to \Het^i(X_{\overline K}, \bQ_l)$
is equal to the multiplication by $\deg(f)$,
and we regard $\Het^i(X_{\overline K}, \bQ_l)$ to be a direct summand of $\Het^i(Y_{\overline K}, \bQ_l)$.
We define $W_n \Het^i(X_{\overline K}, \bQ_l)$ to be 
the restriction of $W_n \Het^i(Y_{\overline K}, \bQ_l)$ 
(the weight filtration of $\Het^i(Y_{\overline K}, \bQ_l)$ defined by using a proper strictly semistable model of $Y$).
Again using the argument in \cite{Ito:weight-monodromy-equal}*{Section 2.3},
we can show that this filtration is independent of the choice of $Y$ and a strictly semistable model of $Y$.
In particular, if $X$ itself admits a strictly semistable model,
then this filtration coincides with the one defined in the previous paragraph.

We also recall the \emph{monodromy filtration} $M_r \Het^i$ on $\Het^i = \Het^i(X_{\overline K}, \bQ_l)$
(see \cite{Saito:weightSS}*{Section 2.1} for details).
Let $t_l \colon I_K \to \bZ_l(1)$ be the homomorphism
defined by $t_l(\sigma) = (\sigma(\pi^{1/l^n}) / \pi^{1/l^n})_{n \in \bZ_{\geq 0}}$,
where $\pi$ is a uniformizer of $\cO_K$,
$(\pi^{1/l^n})_{n \in \bZ_{\geq 0}}$ is a system of $l^{n}$-th roots of $\pi$, and
$I_K = \Ker ( \Gal(\overline K/K) \to \Gal(\overline k/k) )$ is the inertia subgroup.
The monodromy operator $N$ is the unique nilpotent map $N \colon \Het^i(1) \to \Het^i$
such that there exists an open subgroup $J \subset I_K$
such that any element $\sigma \in J$ acts on $\Het^i$ by $\exp(t_l(\sigma) N)$.
Then the monodromy filtration is defined as the unique increasing filtration satisfying 
$M_r = 0$ for $r \ll 0$, $M_r = \Het^i$ for $r \gg 0$,
$N(M_r) \subset M_{r-2}$, and 
$N^r \colon \gr^M_r \isomto \gr^M_{-r}$.
An equivalent definition is $M_r \Het^i = \sum_{p,q \in \bZ_{\geq 0}, p - q = r} (\Ker N^{p+1} \cap \Image N^q)$.
In particular, since $N$ acts by zero on the classes of algebraic cycles, 
we have $\NS(X) \subset \Ker N \subset M_0 \Het^2(1)$.

The \emph{weight-monodromy conjecture} states that 
$M_r \Het^i = W_{r+i} \Het^i$ for any $X$ and any $i$.

\begin{thm} \label{thm:weight-monodromy conjecture}
If $\dim X = 2$ then the weight-monodromy conjecture for $\Het^2$ is true,
i.e.\ we have $M_r \Het^2 = W_{r+2} \Het^2$. 
\end{thm}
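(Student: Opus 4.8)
The plan is to deduce the weight-monodromy conjecture for $\Het^2$ of a surface from the fact that the weight spectral sequence degenerates at $E_2$, together with the combinatorial structure of the monodromy operator on a strictly semistable surface. First I would reduce to the case where $X$ admits a proper strictly semistable model $\cX$ over (a finite extension of the completion of) $\cO_K$, since both the weight filtration and the monodromy filtration are compatible with the alteration construction used to define $W_\bullet$ in general: passing to an alteration $Y$ realizes $\Het^2(X_{\overline K},\bQ_l)$ as a direct summand of $\Het^2(Y_{\overline K},\bQ_l)$ equivariantly for the Galois action, hence also for $N$, and $M_\bullet$ and $W_\bullet$ are each defined by restriction, so it suffices to treat $Y$. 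Then I would invoke the standard fact (Rapoport--Zink, Saito) that the weight spectral sequence computes, via $E_2$-degeneration, the graded pieces $\gr^W_n \Het^2$ as subquotients of the $E_1$ terms $\bigoplus_i \Het^{q-2i}(X_{\overline 0}^{(p+2i)},\bQ_l(-i))$ with $p+q=2$, while the monodromy operator $N$ is identified on $E_1$ with the explicit combinatorial map shifting the index $i$.

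The heart of the argument is the statement that $N^r\colon \gr^M_{r+2}\Het^2 \to \gr^M_{-r+2}\Het^2(r)$ is an isomorphism for all $r$ — equivalently that $M_\bullet$ coincides with the filtration $W_\bullet$ shifted by $2$. For surfaces only $r=0,1,2$ are relevant: the only possibly nonzero graded pieces $\gr^W_n\Het^2$ occur for $n=0,1,2,3,4$, and by Poincar\'e duality it suffices to check the hard Lefschetz-type isomorphisms $N\colon\gr^W_3\isomto\gr^W_1(1)$ and $N^2\colon\gr^W_4\isomto\gr^W_0(2)$. The term $\gr^W_0$ involves only $\Het^0$ of double and triple intersection curves/points, $\gr^W_4$ only $\Het^4(X_{\overline 0}^{(0)})$ and $\Het^2$ of components and $\Het^0$ of triple points, and $\gr^W_1,\gr^W_3$ involve $\Het^1$ of components and double curves. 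On each of these pieces $N$ is (up to sign and Tate twist) an alternating sum of restriction/Gysin maps between cohomology of the smooth proper varieties $X_{\overline 0}^{(p)}$, so the required injectivity/surjectivity follows from the Hard Lefschetz theorem applied on these $X_{\overline 0}^{(p)}$ — this is precisely the computation carried out by Rapoport--Zink and refined by M.~Saito \cite{Saito:weightSS} showing that $E_2$-degeneration plus Hard Lefschetz on the strata forces $M_\bullet = W_{\bullet}[\,\text{shift}\,]$ in the range $i\le \dim X + 1$, and in particular for $\Het^2$ when $\dim X = 2$. I would cite \cite{Saito:weightSS}*{Corollary 2.8} (or \cite{Rapoport--Zink:monodromie}) for this and note that the non-scheme case is covered by \cite{Matsumoto:goodreductionK3}*{Proposition 2.3}.

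The main obstacle — or rather the point that needs care rather than a genuinely new idea — is that the general weight-monodromy conjecture is open, so the proof must genuinely exploit $\dim X = 2$: in higher weight $\gr^W_n\Het^i$ involves $\Het^{\ge 2}$ of the strata where Hard Lefschetz on the strata no longer suffices to pin down the cokernels and kernels of $N$, whereas for $\Het^2$ of a surface every stratum-cohomology group appearing has degree $\le 2$ and the relevant maps are either isomorphisms by Hard Lefschetz or are controlled by $\NS$ of curves, so the alternating-sum complexes computing $\gr^W_n$ are short enough that $E_2$-degeneration closes the argument. A secondary technical point is keeping track of Tate twists and the precise normalization of $N$ on $E_1$ (the twist $\bQ_l(-i)$ versus the shift $p\mapsto p+2$), but this is bookkeeping. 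I would therefore present the proof as: (i) reduce via alterations to a proper strictly semistable surface; (ii) write down $\gr^W_n\Het^2$ for $n=0,\dots,4$ from the (degenerate) weight spectral sequence; (iii) identify $N$ and verify the two isomorphisms $N\colon\gr^W_3\to\gr^W_1(1)$, $N^2\colon\gr^W_4\to\gr^W_0(2)$ using Hard Lefschetz on the strata; (iv) conclude $M_r = W_{r+2}$.
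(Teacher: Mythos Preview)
Your approach is correct and matches the paper's: reduce via de~Jong alterations to the case where $X$ has a proper strictly semistable model that is a scheme, then invoke the known result in that case. The paper simply cites \cite{Rapoport--Zink:monodromie}*{Satz 2.13} for the semistable-scheme case and \cite{Saito:weightSS}*{Lemma 3.9} for the alteration reduction, whereas you sketch the underlying Rapoport--Zink argument (Hard Lefschetz on strata forcing $N^r\colon \gr^W_{r+2}\isomto \gr^W_{2-r}$); note that the precise reference for the semistable case is Satz 2.13 rather than Saito's Corollary 2.8 (which is the construction of the spectral sequence), and your informal description of which $E_1$-terms feed into $\gr^W_0$ and $\gr^W_4$ is slightly garbled, though this does not affect the argument.
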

This is proved by Rapoport--Zink \cite{Rapoport--Zink:monodromie}*{Satz 2.13} in the case $X$ admits a strictly semistable model that is a scheme.
The general case is reduced to this case by de Jong's alteration (see Saito \cite{Saito:weightSS}*{Lemma 3.9}).

Now we consider the actions of algebraic correspondences 
on the $l$-adic cohomology groups.

\begin{thm} \label{thm:action on gr}
Let $X$ be a proper smooth variety over $K$ of dimension $d$,
and $\Gamma \in \CH^d(X \times X)$ an algebraic correspondence
(i.e.\ $\Gamma$ is a $\bZ$-linear combination of codimension $d$ subvarieties of $X \times X$).
Then,

\textup{(1)} For each integer $0 \leq i \leq 2d$,
the action of $\Gamma^*$ on $\Het^i(X_{\overline K}, \bQ_l)$ preserves the weight filtration.
Hence it acts on $\gr^W_n = \gr_n^W \Het^i(X_{\overline K}, \bQ_l)$. 

\textup{(2)} For each integer $0 \leq i \leq 2d$ and each $n \in \set{ 0, 1, 2d-1, 2d}$, 
the characteristic polynomial of $\Gamma^* \rvert_{\gr^W_n}$
is in $\bZ[x]$ and independent of $l$.
If $d \leq 2$, then this holds for all $0 \leq n \leq 2d$.
\end{thm}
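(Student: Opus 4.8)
The plan is to combine the weight spectral sequence description of $\gr^W_n \Het^i$ with standard $l$-independence statements for the cohomology of the smooth proper $k$-varieties $X_{\overline 0}^{(p)}$. First I would reduce to the strictly semistable case: by de Jong's alteration $f\colon Y \to X$ we have $\Het^i(X_{\overline K},\bQ_l)$ as a $\Gamma^*$-equivariant direct summand of $\Het^i(Y_{\overline K},\bQ_l)$, and the weight filtration on the former is the restriction of that on the latter, so it suffices to treat $\Gamma'$ on $Y$ (with $\Gamma' = (f\times f)^*\Gamma$ or the relevant push/pull composite); part (1) is then immediate from the fact that $\Gamma^*$ is induced by a morphism of weight spectral sequences, which preserves the filtration by construction. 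For (2), $E_2$-degeneration of the weight spectral sequence identifies $\gr^W_n \Het^i$ with the cohomology $E_2^{p,q}$ of the complex $(E_1^{\bullet,q}, d_1)$ in the appropriate bidegree ($p = n - i$, $q = i - p$... more precisely $\gr^W_n$ picks out $E_2^{n-q_0,\,q_0}$ for suitable indexing), and $\Gamma^*$ acts compatibly on the whole $E_1$-page, which is built out of $\Het^{\ast}(X_{\overline 0}^{(p+2i)}, \bQ_l(-i))$ — cohomology of smooth proper varieties over the finite-type field $\overline k$, on which algebraic correspondences act with characteristic polynomials in $\bZ[x]$ independent of $l$.

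The subtlety is that $l$-independence of the characteristic polynomial of $\Gamma^*$ on a \emph{subquotient} of $\Het^\ast$ does not formally follow from $l$-independence on each $E_1^{p,q}$, since the $d_1$-differentials (hence the sub and quotient pieces cut out) could a priori vary with $l$. The clean way around this is to work with the \emph{whole row} at once. The alternating sum $\sum_q (-1)^q [\Gamma^* \mid E_1^{p,q}] = \sum_q (-1)^q [\Gamma^* \mid E_2^{p,q}]$ (traces, or better, the class in a suitable $K$-group, or simply characteristic polynomials combined multiplicatively with signs) is $l$-independent because the left side is. This handles virtual classes but not individual $E_2^{p,q}$ unless only one $q$ contributes. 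For the extreme weights $n \in \{0,1,2d-1,2d\}$ this is exactly what happens: for $n = 0$ only $\gr^W_0 \Het^i$ with $i$ forced small is nonzero and it equals $E_2^{-i,2i}$ sitting in a single spot; concretely $\gr^W_0\Het^i = \Image\big(\Het^i(X_{\overline 0}^{(\text{top})}) \to \cdots\big)$ appears as the cohomology of $(E_1^{\bullet, q}, d_1)$ in one bidegree only (the analogous statement holds for $n=1$, and dually for $n = 2d-1, 2d$ by hard Lefschetz/Poincaré duality pairing $\gr^W_n$ with $\gr^W_{2i-n}$ equivariantly up to the correspondence transpose). Thus for these $n$ the subquotient $\gr^W_n$ is the cohomology of a single complex concentrated in one row, and its $\Gamma^*$-characteristic polynomial is pinned down $l$-independently by the Euler-characteristic (alternating product) identity applied along that row, the other entries of which have $l$-independent characteristic polynomials as cohomology of smooth proper varieties.

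For the case $d \le 2$ and arbitrary $0 \le n \le 2d$, the new input is Theorem \ref{thm:weight-monodromy conjecture}: for $\dim X = 2$ the weight filtration on $\Het^2$ agrees with the monodromy filtration $M_\bullet$, so $\gr^W_n\Het^2 = \gr^M_{n-2}\Het^2$, and the latter is described purely via powers of the monodromy operator $N$ acting on $\Het^2(X_{\overline K},\bQ_l)$. Since $N$ commutes with $\Gamma^*$, the primitive parts $P_r = \Ker\big(N^{r+1}\colon \gr^M_r \to \gr^M_{-r-2}\big)$ are $\Gamma^*$-stable, and $N^r$ gives $\Gamma^*$-equivariant isomorphisms $\gr^M_r \isomto \gr^M_{-r}$; combining this with the already-established $l$-independence for $n \in \{0,1,3,4\}$ (the extreme weights, here $2d = 4$) and for $\Het^i$ with $i \ne 2$ (where weights are $\le 1$ or $\ge 2d-1$, already covered, since $\gr^W_n\Het^0$ and $\gr^W_n\Het^4$ are concentrated in weight $0,2$ resp.\ $2,4$ — wait: $\Het^0$ has weight $0$ only, $\Het^1$ weights in $\{0,1\}$... all within the listed set) forces $l$-independence of the single remaining middle weight $\gr^W_2\Het^2$ by the Euler-characteristic relation $\sum_i (-1)^i[\Gamma^*\mid\Het^i] = \sum_i(-1)^i\sum_n[\Gamma^*\mid\gr^W_n\Het^i]$ applied now that all but one term is known. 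I expect the \emph{main obstacle} to be the bookkeeping that for $n \in \{0,1,2d-1,2d\}$ the relevant $\gr^W_n$ really is concentrated in a single $(p,q)$ on the $E_2$-page — i.e.\ that no nontrivial $d_1$ identifies it with a genuine subquotient rather than a full cohomology group of an $l$-independent complex — but this is a direct consequence of the shape of $E_1^{p,q}$ (the index $i$ in the sum is forced, and $q - 2i$ must be non-negative and $\le 2\dim X_{\overline 0}^{(p+2i)}$), together with $E_2$-degeneration, so it is routine once set up carefully.
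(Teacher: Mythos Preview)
Your Euler-characteristic argument for the extreme weights has a genuine gap. Summing along the row $q = n$, the identity $\sum_p (-1)^p [\Gamma^* \mid E_1^{p,n}] = \sum_p (-1)^p [\Gamma^* \mid E_2^{p,n}]$ is correct, but the right-hand side is $\sum_i (-1)^{i-n} [\Gamma^* \mid \gr^W_n \Het^i]$, an alternating sum over \emph{all} cohomological degrees $i$. Knowing that each $E_1^{p,n}$ has $l$-independent characteristic polynomial only gives you the $l$-independence of this alternating product, not of any individual $\gr^W_n \Het^i$; and in general several of these are nonzero simultaneously. Your remark that ``only one $q$ contributes'' is a confusion of rows and columns: it is $q = n$ that is fixed, while $p$ (equivalently $i$) varies.

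The paper's argument for $n = 0$ and $n = 1$ is more structural, and is what you gesture at only in your final sentence with the phrase ``$l$-independent complex''. For $n = 0$, the entire row $E_1^{\bullet,0} = \Het^0(X_{\overline 0}^{(\bullet)}, \bQ_l)$ is the base change to $\bQ_l$ of the complex $V^{(\bullet)}$ of free $\bZ$-modules generated by the irreducible components of $X_{\overline 0}^{(\bullet)}$, with purely combinatorial differentials; the cycles $\overline\Gamma^{(p)}$ furnished by Saito act on $V^{(\bullet)}$ by $\bZ$-linear maps independent of $l$, so each individual cohomology group $E_2^{p,0}$ together with its $\Gamma^*$-action is already $l$-independent. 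For $n = 1$, the row $E_1^{\bullet,1} = \Het^1(X_{\overline 0}^{(\bullet)}, \bQ_l)$ is the rational Tate module of a complex of abelian varieties (the Picard varieties of the strata), on which the $\overline\Gamma^{(p)}$ act through endomorphisms of abelian varieties; again the complex and the action live in an $l$-independent category. A further detail you omit is that this only places the traces in $(d!)^{-1}\bZ$, and one needs a Kleiman-type lemma (Lemma~\ref{lem:denominator}) applied to all powers of $\Gamma$ to conclude that the characteristic polynomial lies in $\bZ[x]$ rather than $(d!)^{-1}\bZ[x]$. Your treatment of the remaining case $n = 2$ for $d = 2$ is essentially what the paper does, though the detour through the monodromy filtration is unnecessary: once the characteristic polynomial of $\Gamma^*$ on $\Het^2$ itself is known to be $l$-independent (Saito) and on $\gr^W_n$ for $n \neq 2$, the one on $\gr^W_2$ is determined.
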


To the author's knowledge,
this result is not previously known.

We use the following lemma, which follows from \cite{Kleiman:weilconjectures}*{Lemma 2.8} (cf.\ \cite{Saito:weightSS}*{Lemma 3.4}).
\begin{lem} \label{lem:denominator}
Let $F$ be a field of characteristic $0$
and $f$ an $F$-linear endomorphism of an $F$-vector space of finite dimension.
If there exists a nonzero integer $N$ such that any power of $f$ has trace in $N^{-1} \bZ$,
then the characteristic polynomial of $f$ has coefficients in $\bZ$.
\end{lem}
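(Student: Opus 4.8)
\textbf{Proof proposal for Lemma \ref{lem:denominator}.}
The plan is to deduce the integrality of the coefficients of the characteristic polynomial from the integrality (up to the fixed denominator $N$) of the traces of all powers of $f$, via the Newton--Girard identities relating power sums to elementary symmetric functions, combined with a $p$-adic argument to control denominators. Let $\lambda_1, \dots, \lambda_d$ be the eigenvalues of $f$ in an algebraic closure of $F$, and write $p_k = \sum_i \lambda_i^k = \Tr(f^k)$ and $e_k = e_k(\lambda_1, \dots, \lambda_d)$ for the elementary symmetric functions, so that the characteristic polynomial is $\prod_i (x - \lambda_i) = \sum_k (-1)^k e_k x^{d-k}$. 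By hypothesis every $p_k$ lies in $N^{-1}\bZ$.

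First I would record that the Newton--Girard formulas express each $e_k$ as a polynomial with $\bQ$-coefficients in $p_1, \dots, p_k$; since the $p_k$ are rational, this already shows $e_k \in \bQ$, so the characteristic polynomial lies in $\bQ[x]$. The real content is that the $e_k$ are in fact integers. For this I would argue one prime at a time: it suffices to show that for every prime number $\ell$, the $\ell$-adic valuation $v_\ell(e_k) \geq 0$. The key observation is that the $\lambda_i$ are algebraic integers. Indeed, the power sums $p_k$ are bounded denominators in $N^{-1}\bZ$, and a standard argument shows that a family of algebraic numbers whose power sums all have $\ell$-adic valuation bounded below consists of $\ell$-adic integers (with respect to any chosen extension of $v_\ell$): if some $\lambda_i$ had strictly negative valuation, then grouping the eigenvalues by valuation and examining the power sums $p_k$ for large $k$ would force $v_\ell(p_k) \to -\infty$, contradicting $p_k \in N^{-1}\bZ$. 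Once the $\lambda_i$ are $\ell$-adic integers for every $\ell$ (equivalently, genuine algebraic integers), their elementary symmetric functions $e_k$ are algebraic integers; being also rational, they lie in $\bZ$.

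In more detail, I expect the clean route to avoid choosing embeddings is as follows. Fix a prime $\ell$ and extend $v_\ell$ to $\overline{\bQ}$; let $v$ denote this valuation normalized so that $v(\ell) = 1$. Suppose, for contradiction, that $m := \min_i v(\lambda_i) < 0$, and let $S = \set{i : v(\lambda_i) = m}$ be the set of eigenvalues of minimal valuation. For large $k$ the sum $\sum_{i \in S} \lambda_i^k$ dominates, and one checks that $v(p_k) = km + v\bigl(\sum_{i \in S}\zeta_i^k\bigr)$ where $\zeta_i = \lambda_i / \lambda_{i_0}$ are units; choosing $k$ along a suitable arithmetic progression makes the unit-part sum nonzero of bounded valuation, so that $v(p_k) \to -\infty$ as $k \to \infty$, contradicting $v(p_k) \geq -v(N)$. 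Hence $v(\lambda_i) \geq 0$ for all $i$, and thus $v(e_k) \geq 0$. Since this holds for every prime $\ell$ and the $e_k$ are rational, we conclude $e_k \in \bZ$.

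The main obstacle I anticipate is making the dominance argument in the previous paragraph fully rigorous: controlling the valuation of the unit-part sum $\sum_{i \in S}\zeta_i^k$ along a sequence of exponents $k$, since a priori these sums could vanish or have large valuation for individual $k$ due to cancellation among roots of unity or $p$-adically close units. The standard device is to pass to a further power of $f$ or to use an averaging/pigeonhole argument over residues of $k$ modulo a suitable integer so that the dominant contribution cannot cancel for all $k$ in a progression; alternatively, one may invoke directly the cited result \cite{Kleiman:weilconjectures}*{Lemma 2.8}, which packages exactly this $\ell$-adic estimate, and thereby reduce the lemma to a formal consequence of Newton's identities. Given that the paper explicitly attributes the statement to that reference, I would present the Newton's-identity reduction in full and then invoke the cited estimate for the integrality of the eigenvalues, flagging the dominance estimate as the technical heart supplied by \cite{Kleiman:weilconjectures}.
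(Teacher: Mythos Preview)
The paper does not give its own proof of this lemma: it simply states that the lemma ``follows from \cite{Kleiman:weilconjectures}*{Lemma 2.8} (cf.\ \cite{Saito:weightSS}*{Lemma 3.4})''. Kleiman's Lemma 2.8 is essentially this very statement, not an auxiliary ``dominance estimate'' to be plugged into a Newton--Girard reduction. So there is nothing in the paper to compare your argument against beyond the bare citation.

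Your outline is nonetheless a correct route to an independent proof. The step you flag as the obstacle---showing that $v\bigl(\sum_{i\in S}\zeta_i^{\,k}\bigr)$ stays bounded along infinitely many $k$---can be handled cleanly without any averaging or pigeonhole trick: the sequence $q_k=\sum_{i\in S}\zeta_i^{\,k}$ satisfies a linear recurrence of length $s=\lvert S\rvert$ whose characteristic polynomial $\prod_{i\in S}(t-\zeta_i)$ has \emph{unit} constant term (the $\zeta_i$ are units). Running the recurrence backwards, if $v(q_k)\geq M$ for all $k\geq K$ then $v(q_k)\geq M$ for all $k\geq 0$, in particular $M\leq v(q_0)=v(s)$. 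Hence infinitely many $k$ satisfy $v(q_k)\leq v(s)$, and for such $k$ large enough one gets $v(p_k)=km+v(q_k)\to-\infty$, contradicting $p_k\in N^{-1}\bZ$. With this in hand your argument is complete and self-contained; you do not need to invoke Kleiman at all.
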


\begin{proof}[Proof of Theorem \ref{thm:action on gr}]
First we prove the assertions (1) and (2) under
the assumption that $X$ admits a proper strictly semistable model $\cX$ \emph{that is a scheme}.

By \cite{Saito:weightSS}*{Proposition 2.20} there exists 
a collection of algebraic cycles $\overline \Gamma^{(p)} \in \CH^{d-p}(X_{\overline 0}^{(p)} \times X_{\overline 0}^{(p)})$ ($p \geq 0$)
such that
there is an endomorphism of the weight spectral sequence that acts on 
\[
{E}_1^{p,q} = \bigoplus_{i \geq \max\{0,-p\}} \Het^{q-2i} (X_{\overline 0}^{(p+2i)}, \bQ_l(-i))
\quad \text{by} \quad
(d!)^{-1} \cdot \bigoplus_{i \geq \max\{0,-p\}} {{\overline{\Gamma}}^{(p+2i)}}^*
\]
and on $\Het^{p+q}({X}_{\overline K}, \bQ_l)$ by $\Gamma^*$.
Hence (1) follows.

Next we show (2).

Assume $n = 0$.
Then $\gr^W_0 \Het^i = E_2^{i,0}$ is the $i$-th cohomology of the complex 
$E_1^{\bullet, 0} = \Het^0(X_{\overline 0}^{(\bullet)}, \bQ_l)$.
This complex is naturally isomorphic to the base change $V^{(\bullet)} \otimes_\bZ \bQ_l$ 
of the complex $V^{(\bullet)}$
of the $\bZ$-modules freely generated by the components of $X_{\overline 0}^{(\bullet)}$.
The algebraic correspondences $\overline \Gamma^{(p)}$ 
induce an endomorphism of this complex of $\bZ$-modules, 
independent of $l$.
Hence the coefficients of the characteristic polynomial of $\Gamma^* \restrictedto{\gr^W_0}$ lie in $(d!)^{-1} \bZ$,
and the same assertion holds for any power of $\Gamma$.
Hence by Lemma \ref{lem:denominator} the coefficients actually lie in $\bZ$. 

Assume $n = 1$.
Then $\gr^W_1 \Het^i = E_2^{i-1,1}$ is
the $i$-th cohomology of the complex $E_1^{\bullet-1,1} = \Het^1(X_{\overline 0}^{(\bullet - 1)}, \bQ_l)$.
By \cite{Saito:weightSS}*{Lemma 3.6}, 
this complex is naturally isomorphic to $T_l(A^{(\bullet-1)}) \otimes \bQ_l(-1)$
induced by the complex $A^{(\bullet-1)}$ of the Picard varieties of $X_{\overline 0}^{(\bullet-1)}$.
As above,
$\overline \Gamma^{(p)}$ induce 
an endomorphism of this complex of abelian varieties
that is independent of $l$.
Hence the action of $\Gamma$ on $\gr^W_1 \Het^i$ is $(d!)^{-1}$ times the $l$-adic realization of an endomorphism of an abelian variety,
and therefore its characteristic polynomial has coefficients in $(d!)^{-1} \bZ$ and is independent of $l$.
Again by Lemma \ref{lem:denominator} the coefficients lie in $\bZ$.

The cases of $n = 2d-1, 2d$ are similar to the cases of $n = 1, 0$ respectively.

Consider the latter assertion of (2).
Since $\gr^W_n$ is $0$ outside the range $0 \leq n \leq 2d$, 
we are done if $d \leq 1$.
If $d = 2$, then the only remaining case of $n = 2$ 
follows from the assertions for $\Het^i$ (\cite{Saito:weightSS}*{Corollary 0.2})
and for $\gr^W_n$ for the other $n$'s ($n = 0,1,3,4$).

Now we show that the assertions for the general case (i.e.\ not assuming the existence of a semistable model that is a scheme)
can be reduced to this special case.
Take an alteration $Y \to X$ as above (after replacing $\cO_K$ if necessary).
Assertion (1) applied to $f^* \circ f_* \in \CH^d(Y \times Y)$ implies that 
$f_*(W_n \Het^i(Y_{\overline K}, \bQ_l))$ is contained in $W_n \Het^i(X_{\overline K}, \bQ_l)$.
Hence $\Gamma^* = \deg (f)^{-2} \cdot f_* \circ (f^* \Gamma)^* \circ f^*$ preserves the filtration.
Using Lemma \ref{lem:denominator} and the equality 
$\trace(\Gamma^* \mid W_n \Het^i(X_{\overline K}, \bQ_l)) 
= \deg(f)^{-1} \cdot \trace(f^* \circ \Gamma^* \circ f_* \mid W_n \Het^i(Y_{\overline K}, \bQ_l))$
for $\Gamma$ and its powers,
we reduce assertion (2) for $\Gamma$ to the case of $Y$.
\end{proof}

For $n = 1, 2d-1$ we actually proved:
\begin{cor} \label{cor:action on gr1}
Let $X$ and $\Gamma$ be as in Theorem \ref{thm:action on gr}.
Then for each integer $0 \leq i \leq 2d$ and each $n \in \set{ 1, 2d-1}$, 
$\Gamma^* \rvert_{\gr^W_n} \in \End(\gr^W_n \Het^i)$
lies in the image of the algebra $\End(B_n^{(i)}) \otimes \bQ$,
where $B_n^{(i)}$ is an abelian variety with $\gr^W_n \Het^i \cong \Het^1((B_n^{(i)})_{\overline k}, \bQ_l)$.
\end{cor}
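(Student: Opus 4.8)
The plan is to read this off the proof of Theorem~\ref{thm:action on gr}: in the two cases $n=1$ and $n=2d-1$ that argument already presents $\gr^W_n\Het^i$ as the cohomology of a complex of $l$-adic Tate modules of abelian varieties on which $\Gamma^*$ acts, $l$-adically integrally, through genuine morphisms of those abelian varieties. It then only remains to recognize the cohomology of such a complex as (the Tate module of) a single abelian variety, which is where Poincar\'e complete reducibility enters.

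Concretely, I would first treat $n=1$ assuming $X$ admits a proper strictly semistable model $\cX$ that is a scheme. As recalled in the proof of Theorem~\ref{thm:action on gr}, $\gr^W_1\Het^i=E_2^{i-1,1}$ is the degree-$(i-1)$ cohomology of the complex $E_1^{\bullet,1}$ with $E_1^{p,1}=\Het^1(X_{\overline 0}^{(p)},\bQ_l)$, and by \cite{Saito:weightSS}*{Lemma 3.6} this complex is, up to a Tate twist that will not matter, $V_l(A^\bullet)$ for a genuine complex $A^\bullet$ of abelian varieties over $k$ --- the $\Pic^0$'s of the strata $X_0^{(p)}$, with pullback differentials --- where $V_l(-)=T_l(-)\otimes_{\bZ_l}\bQ_l$. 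Since the category of abelian varieties over $k$ up to isogeny is a semisimple $\bQ$-linear abelian category (Poincar\'e complete reducibility), $A^\bullet$ has a cohomology object $H^{i-1}(A^\bullet)$ there; let $B=B_1^{(i)}$ be an abelian variety isogenous to its dual. The functor $V_l$ is exact and faithful on that category, hence commutes with passage to cohomology, and unwinding the Weil pairing this yields a (Galois-equivariant, up to the twist) isomorphism $\gr^W_1\Het^i\cong\Het^1(B_{\overline k},\bQ_l)$. Finally, the proof of Theorem~\ref{thm:action on gr} supplies cycles $\overline\Gamma^{(p)}\in\CH^{d-p}(X_0^{(p)}\times X_0^{(p)})$ which induce an endomorphism of $E_1^{\bullet,1}$ realizing $(d!)\cdot\Gamma^*$ on $\gr^W_1$; each $\overline\Gamma^{(p)}$ acts on $\Pic^0(X_0^{(p)})$ by an honest endomorphism, and faithfulness of $V_l$ forces these to commute with the differentials of $A^\bullet$, so they assemble into an endomorphism of $A^\bullet$ and thus an element $\beta\in\End(B)\otimes\bQ$ with $V_l(\beta)=(d!)\cdot\Gamma^*\restrictedto{\gr^W_1\Het^i}$. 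Hence $\Gamma^*\restrictedto{\gr^W_1\Het^i}$ is the image of $(d!)^{-1}\beta\in\End(B)\otimes\bQ$, the factor $(d!)^{-1}$ causing no trouble because $\End(B)\otimes\bQ$ is a $\bQ$-algebra.

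For a general $X$ I would reduce to the scheme case just as in the proof of Theorem~\ref{thm:action on gr}, via an alteration $f\colon Y\to X$: the idempotent $\deg(f)^{-2}f^*f_*$ that cuts out $\gr^W_1\Het^i(X_{\overline K},\bQ_l)$ as a direct summand of $\gr^W_1\Het^i(Y_{\overline K},\bQ_l)$, and the operator $\deg(f)^{-2}f^*\circ\Gamma^*\circ f_*$, are both induced by algebraic correspondences on $Y$, hence by the scheme case both lie in the image of $\End(B')\otimes\bQ$ for an abelian variety $B'$ with $\gr^W_1\Het^i(Y_{\overline K},\bQ_l)\cong\Het^1(B'_{\overline k},\bQ_l)$; the summand then corresponds to an idempotent of $\End(B')\otimes\bQ$, whose image is an abelian variety $B$ with $\gr^W_1\Het^i(X_{\overline K},\bQ_l)\cong\Het^1(B_{\overline k},\bQ_l)$ and through whose endomorphism algebra $\Gamma^*$ factors. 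The case $n=2d-1$ is entirely parallel --- now only the terms $\Het^{2\dim-1}$ of the strata survive in $E_1^{\bullet,2d-1}$, and $\Het^{2\dim-1}$ of a smooth proper variety is again the $l$-adic realization of an abelian variety --- or one simply dualizes the case $n=1$: Poincar\'e duality identifies $\gr^W_{2d-1}\Het^i$ with the $\bQ_l$-dual of $\gr^W_1\Het^{2d-i}$ compatibly with $\Gamma^*$ and the transpose correspondence $({}^t\Gamma)^*$, and together with $\Het^1(C_{\overline k},\bQ_l)^\vee\cong\Het^1(C^\vee_{\overline k},\bQ_l)$ (up to twist) and $\End(C^\vee)\otimes\bQ\cong(\End(C)\otimes\bQ)^{\mathrm{op}}$ this gives the claim with $B_{2d-1}^{(i)}=C^\vee$.

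Since all the substantial ingredients --- \cite{Saito:weightSS}*{Lemma 3.6}, the construction of the $\overline\Gamma^{(p)}$ and the $E_2$-degeneration from Theorem~\ref{thm:action on gr}, and Poincar\'e complete reducibility --- are already available, I do not expect a genuine obstacle. The one place demanding care is the bookkeeping of Tate twists and the Weil pairing, so that the isomorphism $\gr^W_n\Het^i\cong\Het^1((B_n^{(i)})_{\overline k},\bQ_l)$ holds exactly as stated and the algebra map $\End(B_n^{(i)})\otimes\bQ\to\End(\gr^W_n\Het^i)$ through which $\Gamma^*$ is shown to factor is precisely the $l$-adic realization.
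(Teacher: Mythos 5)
Your proposal is correct and follows essentially the same route as the paper: the paper presents this corollary as already established in the $n=1$ and $n=2d-1$ cases of the proof of Theorem \ref{thm:action on gr}, with $B_1^{(i)}$ the $i$-th ``cohomology'' (in the isogeny category) of the complex $A^{(\bullet-1)}$ of Picard varieties and $B_{2d-1}^{(i)}$ its dual, exactly as you spell out via Poincar\'e complete reducibility, exactness of $V_l$, and the reduction to the scheme case by alterations.
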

$B_1^{(i)}$ is obtained as the $i$-th ``cohomology'' of the complex $A^{(\bullet-1)}$ above,
and $B_{n-1}^{(i)}$ is the dual of $B_1^{(i)}$.

\begin{rem}
We can also show Theorem \ref{thm:action on gr}
without using alterations
if $X$ admits a strictly semistable model that is not necessarily a scheme,
which is the case in the setting of our main theorems.
For this,
we need to generalize \cite{Saito:weightSS}*{Proposition 2.20} to the case of algebraic spaces.
Most of its proof is \'etale-local and hence can be reduced to the scheme case,
and the only non-trivial part is the construction (\cite{Saito:weightSS}*{Lemma 2.17}) of cycle classes $\Gamma^{(p)}$ on $X_{\overline 0}^{(p)}$
for a cycle class $\Gamma$ on the generic fiber $X$.
Although we cannot mimic the construction of Saito (which uses locally-free resolution of coherent sheaves),
we can take an algebraic cycle on $X_{\overline 0}^{(p)}$ to be the intersection with the closure of (a cycle representing) $\Gamma$,
and then check the required properties \'etale-locally.
We omit the details.
\end{rem}

\section{Kulikov models} \label{sec:Kulikov}

\begin{defn}
Let $X$ be a K3 surface or an abelian surface over $K$.
An algebraic space $\cX$ over $\cO_K$ with generic fiber $X$ is a \emph{Kulikov model} of $X$
if it is a proper strictly semistable model (in the sense of the previous section)
and its relative canonical divisor $K_{\cX/\cO_K}$ is trivial.
\end{defn}

\begin{rem} \label{rem:Kulikov}
The standard, but conditional, recipe to construct a Kulikov model of a K3 (or an abelian) surface $X$
is the following. 
Take a proper strictly semistable model of $X$ after extending $K$,
then apply a suitable MMP to get a log terminal model with nef canonical divisor (then the canonical divisor is in fact trivial),
and then apply Artin's simultaneous resolution to make it semistable.
If the residue field $k$ is of characteristic $0$ this is unconditional.
If $\charac k = p > 0$ then the existence of a semistable model is still conjectural
and the MMP is proved only for $p \geq 5$ \cite{Kawamata:mixed3fold}.
\end{rem}

Classification of the special fibers of Kulikov models is given by 
Kulikov \cite{Kulikov:degeneration}*{Theorem II} in characteristic $0$
and Nakkajima \cite{Nakkajima:logk3}*{Proposition 3.4} in characteristic $>0$.
Then we can compute $\dim \gr^W_n$ using the weight spectral sequence.
We summarize:
\begin{prop} \label{prop:Kulikov:K3}
Let $X$ be a K3 surface and $\cX$ a Kulikov model of $X$.

\textup{(1)} The geometric special fiber $X_{\overline 0} = X_0 \otimes \overline k$ is one of the following.
\begin{description}
 \item [Type I] a smooth K3 surface.
 \item [Type II] $X_{\overline 0} = Z_1 \cup \cdots \cup Z_N$, $N \geq 2$,
 $Z_1$ and $Z_N$ are rational and the others are elliptic ruled (i.e.\ birational to a $\bP^1$-bundle over an elliptic curve),
 $Z_i \cap Z_j$ is an elliptic curve if $\lvert i-j \rvert = 1$, and there are no other intersections.
 (The components form a ``chain''.)
 \item [Type III] $X_{\overline 0}$ is a union of rational surfaces, each double curve is rational,
 and the dual graph of the components forms a triangulation of $S^2$ (the $2$-dimensional sphere).
\end{description}

\textup{(2)} $\dim_{\bQ_l} \gr^W_n \Het^2(X_{\overline K}, \bQ_l)$ ($n = 0,1,2,3,4$) of the generic fiber $X$ depends only on the type,
and are given by the following.
\begin{description}
 \item [Type I]   $0,0,22,0,0$.
 \item [Type II]  $0,2,18,2,0$.
 \item [Type III] $1,0,20,0,1$.
\end{description}
\end{prop}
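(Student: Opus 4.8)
The statement has two parts. Part (1) is a classification result that I would simply cite: it is Kulikov's theorem \cite{Kulikov:degeneration}*{Theorem II} in characteristic $0$ and Nakkajima \cite{Nakkajima:logk3}*{Proposition 3.4} in positive characteristic, exactly as the text before the statement indicates. The only genuine work is Part (2), computing $\dim_{\bQ_l} \gr^W_n \Het^2(X_{\overline K}, \bQ_l)$ for each of the three types. My plan is to read these dimensions off the weight spectral sequence
\[
E_1^{p,q} = \bigoplus_{i \geq \max\{0,-p\}} \Het^{q-2i}(X_{\overline 0}^{(p+2i)}, \bQ_l(-i)) \Rightarrow \Het^{p+q}(X_{\overline K}, \bQ_l),
\]
which degenerates at $E_2$, so that $\gr^W_n \Het^2 = E_2^{n-2,\,4-n}$. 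Thus for $\Het^2$ I need the $E_2$-terms $E_2^{-2,4}$, $E_2^{-1,3}$, $E_2^{0,2}$, $E_2^{1,1}$, $E_2^{2,0}$, giving $\gr^W_4, \gr^W_3, \gr^W_2, \gr^W_1, \gr^W_0$ respectively. Since $K_{\cX/\cO_K}$ is trivial, each component $Z$ of the special fiber has $K_Z + (\text{double curves on } Z)$ trivial, which is what forces the geometry in (1) (smooth K3; rational/elliptic-ruled chain; rational surfaces with rational double curves and triangulated-$S^2$ dual graph), and I will use these geometric facts as the input.

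The computation then proceeds type by type. For \textbf{Type I}, $X_{\overline 0}$ is smooth, $\cX \to \Spec \cO_K$ is smooth, the spectral sequence is concentrated in $p=0$, and $\gr^W_n \Het^2$ is nonzero only for $n=2$ with dimension $b_2(\text{K3}) = 22$; this gives $0,0,22,0,0$. For \textbf{Type III}, I use that $X_{\overline 0}^{(0)}$ is a disjoint union of rational surfaces, $X_{\overline 0}^{(1)}$ a disjoint union of rational curves ($\bP^1$'s), $X_{\overline 0}^{(2)}$ a finite set of points, and that the dual graph is a triangulation of $S^2$. The relevant complexes are: $\Het^0$ of the strata (computing $\gr^W_0 = E_2^{2,0}$), which is the cohomology of the simplicial cochain complex of the dual complex, so $\gr^W_0 \cong H^2(S^2,\bQ_l)$ is $1$-dimensional; $\Het^1$ of the strata, which all vanish since all strata are rational, so $\gr^W_1 = \gr^W_3 = 0$; and the $p<0$ terms feeding $E_2^{-2,4}$, which by duality (or by Poincaré duality of the dual complex) give $\gr^W_4 \cong H^0(S^2,\bQ_l)$, $1$-dimensional. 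Then $\gr^W_2 = 22 - 1 - 1 = 20$ follows from $\sum_n \dim \gr^W_n = \dim \Het^2 = 22$. This gives $1,0,20,0,1$. For \textbf{Type II}, the double curves are elliptic curves and the middle components are elliptic ruled, so now $\Het^1$ of some strata is nonzero: $\gr^W_1 = E_2^{-1,3}$ (equivalently, by weight-monodromy Theorem \ref{thm:weight-monodromy conjecture}, identified via the monodromy filtration) is computed from the complex $T_l$ of Picard varieties of the strata, which by Corollary \ref{cor:action on gr1} is $\Het^1$ of an abelian variety; a direct check with the chain $Z_1 \cup \cdots \cup Z_N$ — whose only non-rational geometry is a single elliptic curve's worth of $H^1$ surviving — gives $\dim \gr^W_1 = 2$, and by duality $\dim \gr^W_3 = 2$; the dual graph is a segment (contractible), so $\gr^W_0 = \gr^W_4 = 0$; hence $\gr^W_2 = 22 - 2 - 2 = 18$, giving $0,2,18,2,0$.

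The main obstacle is the \textbf{Type II} and \textbf{Type III} bookkeeping: one must correctly identify the $E_1$-differentials and check that $E_2$ has exactly the claimed ranks, which requires knowing the precise structure of the double curves and triple points (for Type III, that each double curve is a $\bP^1$ meeting the triple-point locus in exactly two points, and the combinatorial $S^2$ condition; for Type II, that the elliptic double curves are all isomorphic and the $H^1$'s organize into a single copy of an elliptic curve after taking cohomology of the chain complex). A clean way to handle this uniformly is to invoke $E_2$-degeneration together with the known Betti number $b_2 = 22$ of a K3 surface and the vanishing/one-dimensionality statements for $\gr^W_n$ with $n \in \{0,1,3,4\}$ — all of which only involve $\Het^0$ and $\Het^1$ of rational or (partly) elliptic strata and the combinatorics of the dual complex — so that $\dim \gr^W_2$ is then forced by subtraction. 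I would present the argument in this order: first record $\gr^W_n \Het^2 = E_2^{n-2,4-n}$ and the $E_2$-terms as cohomology of explicit complexes; then treat the "edge" weights $n=0,1,3,4$ using the geometry of strata and the dual complex in each type; then conclude $n=2$ by subtraction from $22$.
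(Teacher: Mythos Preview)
Your approach is correct and is exactly what the paper does: the paper gives no detailed proof, only citing Kulikov and Nakkajima for (1) and saying that (2) is obtained by computing with the weight spectral sequence, which is precisely the plan you spell out. Two small indexing slips to clean up before writing it out: the general formula should be $\gr^W_n \Het^2 = E_2^{2-n,\,n}$ (not $E_2^{n-2,\,4-n}$), consistent with your own correct list $E_2^{-2,4} \leftrightarrow \gr^W_4$, \dots, $E_2^{2,0} \leftrightarrow \gr^W_0$; and in the Type II paragraph you wrote ``$\gr^W_1 = E_2^{-1,3}$'' where you mean $E_2^{1,1}$ (or else $\gr^W_3$). Neither affects the substance of the argument.
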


We will simply say that $X_0$ is of Type I, II, or III if $X_{\overline 0}$ is so.

\section{Proof of the main theorems} \label{sec:proof}

\begin{lem} \label{lem:action on gr}
Let $K$, $X$, and $\cX$ be as in Theorem \ref{thm:main} (so $\charac K \neq 2$).
Assume either $\charac K = p > 2$ and $\height(X) < \infty$, or $\charac K = 0$.
Assume $X_0$ is of Type II or III, and let $n = 3$ or $n = 4$ respectively.
Then any eigenvalue of the action of $g \in \Aut(X)$ on $\gr^W_n = \gr_n^W \Het^2(X_{\overline K}, \bQ_l)$ 
is a primitive $m'$-th root of $1$,
where $m' = m(g)p^e$ for some integer $e \geq 0$ (possibly depending on $g$ and the eigenvalue) if $\charac K = p > 0$,
and $m' = m(g)$ if $\charac K = 0$.
\end{lem}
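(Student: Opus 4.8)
The plan is to reduce the assertion to the structure of the action of $g$ on the transcendental lattice $T_l(X)$ established in Section~2, by showing that for $n \geq 3$ the piece $\gr^W_n \Het^2(X_{\overline K}, \bQ_l)$ is ``purely transcendental''.

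I would first record the relevant decomposition of $H := \Het^2(X_{\overline K}, \bQ_l)$. The cycle class map embeds $H_{\NS} := \NS(X_{\overline K}) \otimes_{\bZ} \bQ_l(-1)$ into $H$ as a subspace stable under $\Gal(\overline K/K)$ and under $\Aut(X)$, and taking the orthogonal complement for the cup product pairing gives a $\Gal(\overline K/K)$- and $\Aut(X)$-stable decomposition $H = H_{\NS} \oplus H_T$ with $H_T := T_l(X) \otimes_{\bZ_l} \bQ_l$, on which $\Aut(X)$ acts through $\chi_l$. By Theorem~\ref{thm:action on gr}(1) applied to the graph of $g$, the action of $g$ preserves the weight filtration $W_\bullet H$, so it acts on each $\gr^W_n H$. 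The key point is that $H_{\NS} \subseteq W_2 H$: since $N$ annihilates classes of algebraic cycles, $H_{\NS} \subseteq \Ker N \subseteq M_0 H$, and $M_0 H = W_2 H$ by the weight--monodromy equality for surfaces (Theorem~\ref{thm:weight-monodromy conjecture}).

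Next I would equip $H_T$ with the induced filtration $W_\bullet H_T := W_\bullet H \cap H_T$ and check that for every $n \geq 3$ the inclusion $H_T \hookrightarrow H$ induces an isomorphism $\gr^W_n H_T \isomto \gr^W_n H$. Injectivity is automatic; for surjectivity, given $v \in W_n H$ write $v = v_{\NS} + v_T$ according to the decomposition, observe $v_{\NS} \in H_{\NS} \subseteq W_2 H \subseteq W_{n-1} H$, so $v_T = v - v_{\NS} \in W_n H \cap H_T = W_n H_T$. Hence, for $n = 3$ if $X_0$ is of Type II and for $n = 4$ if $X_0$ is of Type III, the action of $g$ on $\gr^W_n$ is isomorphic to a subquotient of the action of $\chi_l(g)$ on $T_l(X) \otimes \bQ_l$; in particular every eigenvalue of $g$ on $\gr^W_n$ is an eigenvalue of $\chi_l(g)$.

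To finish, I would quote Section~2. If $\charac K = 0$, Lemma~\ref{lem:chi:0} says the characteristic polynomial of $\chi_l(g)$ is a power of $\Phi_{m(g)}$, so every eigenvalue of $\chi_l(g)$, hence every eigenvalue of $g$ on $\gr^W_n$, is a primitive $m(g)$-th root of $1$. If $\charac K = p > 2$ and $\height(X) < \infty$, then by Lemma~\ref{lem:chi:p}(3) this characteristic polynomial is a product of cyclotomic polynomials of the form $\Phi_{m(g)p^{e_i}}$, and therefore every eigenvalue of $\chi_l(g)$, hence of $g$ on $\gr^W_n$, is a primitive $m(g)p^e$-th root of $1$ for some $e \geq 0$. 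None of the steps above is hard in itself; the only points demanding care are using the containment $H_{\NS} \subseteq W_2 H$ (rather than verifying directly that the orthogonal projector is strictly compatible with $W_\bullet$) to obtain the purely transcendental description of $\gr^W_n$ for $n \geq 3$, and, in positive characteristic, making sure the hypotheses $p > 2$ and $\height(X) < \infty$ are precisely those under which Lemma~\ref{lem:chi:p}(3) applies, with the various exponents $e_i$ there matching the ``$e$ possibly depending on $g$ and the eigenvalue'' in the statement.
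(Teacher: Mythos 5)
Your proposal is correct and follows essentially the same route as the paper: both use $\NS(X_{\overline K})\otimes\bQ_l(-1)\subset\Ker N\subset M_0=W_2$ (via Theorem \ref{thm:weight-monodromy conjecture}) to see that $\gr^W_n$ for $n\geq 3$ is a quotient of $T_l(X)\otimes\bQ_l$, and then quote Lemma \ref{lem:chi:0} and Lemma \ref{lem:chi:p}(3). Your explicit check that the orthogonal decomposition induces $\gr^W_n H_T\isomto\gr^W_n H$ is just a slightly more detailed writing of the same step.
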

\begin{proof}
By Proposition \ref{prop:Kulikov:K3} we have $\gr^W_n = W_n / W_2 \neq 0$.

As noted in Section \ref{sec:wss},
we have $\NS(X_{\overline K}) \otimes_\bZ \bQ_l(-1) \subset \Ker N \subset M_0 \Het^2(X_{\overline K}, \bQ_l)$,
and we have $M_0 \Het^2 = W_2 \Het^2$ (Theorem \ref{thm:weight-monodromy conjecture}).
Hence $\gr^W_n = W_n / W_2$ is a quotient of $T_l(X) \otimes_{\bZ_l} \bQ_l$.
The assertions follow from Lemmas \ref{lem:chi:0}--\ref{lem:chi:p}.
\end{proof}

\begin{proof}[Proof of Theorem \ref{thm:main}]
If $\charac K > 0$ and $\height(X) \geq 3$,
then it is proved in \cite{Rudakov--Zink--Shafarevich}*{Section 2, Corollary}
(without the condition on automorphisms) that $X_0$ is of Type I:
Since the height is upper semi-continuous
and Type II (resp.\ III) surfaces have height $\leq 2$ (resp.\ $= 1$),
$X_0$ cannot be of Type II nor III.

Now assume either $\charac K = p > 2$ and $\height(X) < \infty$, or $\charac K = 0$.
We can apply Lemma \ref{lem:action on gr}
to $\psi_l \colon \Aut(X) \to \GL(\gr^W_n)$.

Assume that $X_0$ is of Type II. Let $n = 3$.
By Corollary \ref{cor:action on gr1}, 
$\psi_l$ factors through $(\End(C)_\bQ)^*$, 
where $C$ is the elliptic curve appearing as the intersection of two components of $X_{\overline 0}$.
For any $g \in \Aut(X)$,
$\psi_l(g)$ belongs to a (commutative) $\bQ$-subalgebra of $\End(C)_\bQ$ generated by a single element ($\psi_l(g)$ itself),
and such a subalgebra is either $\bQ$ or an imaginary quadratic field.
Hence $m' \in \{ 1,2,3,4,6 \}$ and so is $m(g)$.

Assume that $X_0$ is of Type III. Let $n = 4$.
Similarly by Theorem \ref{thm:action on gr} we have $\psi_l(g) \in \GL_1(\bQ) = \bQ^*$.
Hence $m' \in \{ 1,2 \}$ and so is $m(g)$.
\end{proof}

\begin{cor} \label{cor:orientation}
Assume that $X_0$ is of Type III.
Let $g \in \Aut(X)$,
and suppose $g$ extends to an automorphism of $\cX$,
so that $g$ acts on $X_0$ and on the set of the irreducible components of $X_{\overline 0}$.
Then the induced action of $g$ on
the ($2$-element) set of orientations of $S^2$ (which the dual graph of $X_{\overline 0}$ triangulates)
coincides with the image $\rho(g) \in \set{\pm 1}$ of $g$ by $\rho \colon \Aut(X) \to \GL (H^0(X, \Omega^2))$.
\end{cor}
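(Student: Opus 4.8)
The plan is to connect the two $\pm 1$-valued quantities—the action on orientations of $S^2$ and the scalar $\rho(g)$—through the one-dimensional space $\gr^W_4 \Het^2(X_{\overline K},\bQ_l)$, on which $g$ acts by a single sign by Theorem \ref{thm:action on gr} (or the computation in the Type III case of the proof of Theorem \ref{thm:main}). So the strategy is: first identify the action of $g$ on $\gr^W_4$ with the action on orientations, and then identify it with $\rho(g)$.

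For the first identification, I would use the weight spectral sequence for $\cX$. For a Type III Kulikov model, $\gr^W_4 \Het^2 = E_2^{2,2}$ is (up to a Tate twist) the top cohomology $H^2$ of the complex $E_1^{\bullet,2}$ whose terms involve $\Het^0$ of the multiple intersections $X_{\overline 0}^{(p)}$; combinatorially this is exactly the simplicial cochain complex of the dual complex of $X_{\overline 0}$, which triangulates $S^2$. Thus $\gr^W_4 \cong H^2_{\sing}(S^2,\bQ_l(-2))$ is canonically the space of $\bQ_l$-valued top cochains modulo coboundaries, i.e. the orientation line of $S^2$ (Tate-twisted). Since $g$ extends to $\cX$, it acts on $X_{\overline 0}$ and on its dual complex as a simplicial automorphism of the triangulation of $S^2$, compatibly with the spectral sequence; hence the action of $g$ on $\gr^W_4$ is precisely the action on the orientation line of $S^2$, which is the sign in question. (One should be a little careful that the Tate twist $\bQ_l(-2)$ contributes trivially to the sign, since $g$ acts trivially on the cyclotomic line; this is immediate.)

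For the second identification, I would invoke Lemma \ref{lem:action on gr} together with the proof of Theorem \ref{thm:weight-monodromy conjecture}: the quotient map $T_l(X)\otimes\bQ_l \twoheadrightarrow \gr^W_4$ is $\Aut(X)$-equivariant, so the eigenvalue of $g$ on $\gr^W_4$ is an eigenvalue of $\chi_l(g)$ on $T_l(X)$. By Lemma \ref{lem:chi:0} (resp.\ Lemma \ref{lem:chi:p} when $\charac K>0$, noting that a Type III fiber forces $X$ to have finite height, indeed height $1$, so the lifting hypotheses apply and $p \nmid$ the relevant order after the argument already used) the characteristic polynomial of $\chi_l(g)$ is a power of $\Phi_{m(g)}$ (resp.\ $\Phi_{m(g)p^e}$), and every eigenvalue is a primitive $m(g)$-th (resp.\ $m(g)p^e$-th) root of unity; but since $m(g)\in\{1,2\}$ by Theorem \ref{thm:main}(1) and the eigenvalue on $\gr^W_4$ is a sign, it must equal $\rho(g)\in\{\pm1\}$ exactly. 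Indeed $\rho(g)$ is, via the inclusion $H^0(X,\Omega^2)\subset T(X)\otimes\bC$ and its $l$-adic/crystalline avatars, literally one of the eigenvalues of $\chi_l(g)$, and for $m(g)\le 2$ there is only the one eigenvalue $\rho(g)$; so the sign on $\gr^W_4$ equals $\rho(g)$.

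The main obstacle I anticipate is making the first identification fully rigorous, i.e. checking that the natural isomorphism between $E_2^{2,2}$ and the top simplicial cohomology of the dual complex is $\Aut(\cX)$-equivariant and that it really is the \emph{orientation} line rather than just an abstract one-dimensional space; this requires unwinding Saito's functoriality (\cite{Saito:weightSS}) for the correspondence $\Gamma = \Gamma_g$ and tracking signs through the combinatorics of the Čech-type differentials in $E_1^{\bullet,2}$. The rest is bookkeeping with the already-established lemmas.
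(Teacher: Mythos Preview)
Your strategy is the paper's: identify the action of $g$ on $\gr^W_4 \Het^2$ with the action on the orientation line of $S^2$ via the weight spectral sequence, and then link it to $\rho(g)$ through Lemma \ref{lem:action on gr} (the paper leaves this second step implicit, as it has just been carried out in the Type III case of the proof of Theorem \ref{thm:main}).

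One correction is needed in your indexing. In the convention of this paper, $\gr^W_4 \Het^2 = E_2^{-2,4}$, not $E_2^{2,2}$; the paper computes
\[
E_2^{-2,4} = \Ker\bigl(\Het^0(X_{\overline 0}^{(2)}, \bQ_l(-2)) \to \Het^2(X_{\overline 0}^{(1)}, \bQ_l(-1))\bigr) \cong H_2(S^2,\bQ)\otimes_\bQ \bQ_l(-2),
\]
where the differential is the Gysin map from triple points to double curves, so the kernel is the group of simplicial $2$-\emph{cycles} of the dual complex. What you describe---the simplicial \emph{cochain} complex built from $\Het^0$ of the strata with restriction maps---is the row $E_1^{\bullet,0}$, whose second cohomology is $E_2^{2,0} = \gr^W_0 \Het^2 \cong H^2(S^2)$. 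These two one-dimensional pieces are exchanged by $N^2$ (Theorem \ref{thm:weight-monodromy conjecture}), so the sign you extract is the same either way, but the identification should be written with $E_2^{-2,4}$ and $H_2(S^2)$ as in the paper. (Also, the bound $m(g)\in\{1,2\}$ in Type III comes from part (2) of Theorem \ref{thm:main}, not part (1).)
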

\begin{proof}
Indeed, we have 
\begin{align*}
\gr^W_4 \Het^2 = E_2^{-2,4} &= \Ker (\Het^0(X_{\overline 0}^{(2)}, \bQ_l(-2)) \to \Het^2(X_{\overline 0}^{(1)}, \bQ_l(-1))) \\
&\cong H_2(S^2, \bQ) \otimes_\bQ \bQ_l(-2),
\end{align*}
and the two generators of $H_2(S^2, \bZ)$ corresponds to the two orientations.
\end{proof}

\begin{rem} \label{rem:orientation}
Actually it was this assertion (Corollary \ref{cor:orientation}), proposed by Yuji Odaka,
that led the author to the study of this paper.
The author first looked for an example of a Type III degeneration with $m(X) = \card{\Image \rho} \geq 3$,
which would be a counterexample to the assertion, but failed to find one.
It turned out that such examples do not exist!
\end{rem}

Next we prove Theorem \ref{thm:E-version}.
First we define the Hodge endomorphism field.

Let $X$ be a K3 surface over a field $F$ of characteristic $0$.
Let $X_\bC$ be a K3 surface over $\bC$ 
isomorphic to $X$ over some field $F'$ containing both $F$ and $\bC$
(such $X_\bC$ always exists).
We call $E = \End_{\HS}(T(X_\bC)_\bQ)$
the \emph{Hodge endomorphism field} of $X$,
where $\End_{\HS}$ denotes the endomorphisms of a rational Hodge structure.
It is known that $E$ is either a totally real field or a CM field (\cite{Zarhin:HodgegroupsK3}*{Theorem 1.5.1}).

This definition of $E$ depends a priori on the choice of $X_\bC$.
However, if the Hodge conjecture for the self-product of a K3 surface holds,
then every element of $E$ can be realized as the action of an algebraic cycle on $X_\bC \times X_\bC$,
hence of an algebraic cycle on $(X \times X)_{\overline F}$,
and therefore $E$ (up to isomorphism) depends only on $X$.
The conjecture for $X_\bC \times X_\bC$ is proved by Ram\'{o}n Mar\'{i} \cite{RamonMari:Hodgeconjecture}*{Theorem 5.4}
under the assumption that $E$ (of $X_\bC$) is a CM field.
Thus, for a CM field $E$, the statement ``$E$ is the Hodge endomorphism field of $X$'' is well-defined,
and the statement of Theorem \ref{thm:E-version} should be understood accordingly.

\begin{proof}[Proof of Theorem \ref{thm:E-version}]
Assume Type II or III, and let $n = 3$ or $n = 4$ respectively.
We have two homomorphisms of $\bQ$-algebras
$\psi_l \colon \CH^2(X \times X)_\bQ \to \End (\gr^W_n) $
and
$\chi \colon \CH^2(X \times X)_\bQ \to \End_{\HS} T(X_\bC)_\bQ \cong E$.
Since we are assuming that
the Hodge conjecture for $X \times X$ is true, 
the $E$-action is realized as algebraic correspondences,
i.e.\  $\chi$ is surjective.
Since $T(X_\bC) \otimes \bQ_l \to \gr^W_n$ is surjective,
we have a surjection $\Image \chi \surjto \Image \psi_l$.
Since $\Image \chi \cong E$ is a field and $\Image \psi_l$ is nonzero, this surjection is an isomorphism.

Assume that $X_0$ is of Type II. Let $n = 3$.
By Corollary \ref{cor:action on gr1},
$\Image \psi_l$ is contained in $\End(C)_\bQ$ for some elliptic curve $C$.
A (commutative) subfield of $\End(C)_\bQ$ is either $\bQ$ or an imaginary quadratic field.

Assume that $X_0$ is of Type III. Let $n = 4$.
Similarly by Theorem \ref{thm:action on gr},
$\Image \psi_l$ is contained in $M_1(\bQ) = \bQ$.
\end{proof}

\section{Application: moduli spaces of K3 surfaces with non-symplectic automorphisms of prime order} \label{sec:application}

We apply the main theorem to obtain a compactification of
the moduli spaces of K3 surfaces with non-symplectic automorphisms of fixed prime order $\geq 5$.

For a moment we work over $\bC$.
In this section a \emph{lattice} is a free $\bZ$-module of finite rank equipped with a $\bZ$-valued symmetric bilinear form.
Let $U$ be the hyperbolic plane (i.e.\ the rank $2$ lattice with Gram matrix 
$\begin{pmatrix} 0 & 1 \\ 1 & 0 \end{pmatrix}$)
and $E_8$ the (negative definite) root lattice of type $E_8$.
Then $\LK3 := U^{\oplus 3} \oplus E_8^{\oplus 2}$ is isometric to $H^2(X, \bZ)$ for any K3 surface $X$ over $\bC$,
and is called the K3 lattice.

We recall the notation of \cite{Artebani--Sarti--Taki}.
Fix a prime $p \leq 19$
and a primitive $p$-th root $\zeta_p$ of $1$.
Fix an isometry $\sigma \in \Ortho(\LK3)$ of order $p$,
and denote by $[\sigma]$ its conjugacy class.
We write $S(\sigma) = (\LK3)^{\sigma = 1}$.
Let $\cM^\sigma$ be the moduli space of a pair $(X,g)$
consisting of a complex K3 surface $X$ 
and an automorphism $g$ of $X$ of order $p$
with $\rho(g) = \zeta_p$ 
and acting on $H^2(X, \bZ)$ by $\sigma$ via some marking (i.e.\ isometry) $\LK3 \isomto H^2(X, \bZ)$.
We call such $(X,g)$ to be a \emph{$[\sigma]$-polarized K3 surface}.
Such $X$ is automatically algebraic and has an ample class in $S(g) = H^2(X, \bZ)^{g = 1}$.
The marking induces an isometry $S(\sigma) \isom S(g)$.

Let
$D^\sigma  = \{ w \in \bP((\LK3 \otimes \bC)^{\sigma = \zeta_p}) \colon (w, w) = 0, (w, \overline{w}) > 0 \}$:
this is a type IV Hermitian symmetric space if $p = 2$
and a complex ball if $p > 2$.
Define the divisor $\Delta^\sigma = \bigcup_{\delta \in (S(\sigma))^\perp, \delta^2 = -2} (D^\sigma \cap \delta^\perp)$
and the discrete group $\Gamma^\sigma = \{ \gamma \in \Ortho(\LK3) \colon \gamma \sigma = \sigma \gamma \}$.
Then the space $\Gamma^\sigma \backslash (D^\sigma \setminus \Delta^\sigma)$
is naturally isomorphic to the space $\cM^\sigma(\bC)$ of $\bC$-valued points of $\cM^\sigma$
(\cite{Artebani--Sarti--Taki}*{Theorem 9.1}).

Hereafter we consider only isometries $\sigma$ for which $\cM^\sigma$ is nonempty.

In fact, the moduli space $\cM^\sigma$ can be defined algebraically over $\bQ(\zeta_p)$.
Indeed, by \cite{Artebani--Sarti--Taki}*{Proposition 9.3},
if $p \geq 3$, then $[\sigma]$ is uniquely determined by the topology of $\Fix(g) \subset X$,
and if $p = 2$, then $[\sigma]$ is uniquely determined by the
parameters $r,a,\delta$ of the lattice $S(\sigma) \isom S(g)$
which can be read off from the action of $g$ on the $2$-adic \'etale cohomology group $\Het^2(X, \bZ_2)$.

Now we consider the compactification of $\cM^\sigma$.
Theorem \ref{thm:main} does not imply that $\cM^\sigma$ is compact,
since the action on the K3 lattice may change by specialization. 
So we need to attach some $\cM^\tau$'s on the boundary.

Consider the space $ \Gamma^\sigma \backslash D^\sigma $.
The points on the boundary correspond to pairs $(X,g)$ with non-symplectic automorphism of order $p$
but acting on $H^2$ by some $\tau \neq \sigma$.
We can translate this into an algebraic construction of $\overline{\cM^\sigma}$ by attaching $\cM^\tau$'s to $\cM^\sigma$,
and we have $\overline{\cM^\sigma}(\bC) = \Gamma^\sigma \backslash D^\sigma $.
Then we have the following.

\begin{prop}
Let $\sigma$ be an isometry of $\LK3$ of order $p \geq 5$.
Then $\overline{\cM^\sigma}$ is proper.
\end{prop}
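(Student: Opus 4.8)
The plan is to use a standard valuative criterion combined with Theorem~\ref{thm:main}. Since $\cM^\sigma$ is already known to be quasi-projective (being a Zariski-open subset of the arithmetic quotient $\Gamma^\sigma \backslash D^\sigma$, or by its modular interpretation over $\bQ(\zeta_p)$), and $\overline{\cM^\sigma}$ is the partial compactification obtained by adjoining finitely many $\cM^\tau$, it suffices to verify the existence part of the valuative criterion of properness. So let $\cO_K$ be a DVR with fraction field $K$, and let $\Spec K \to \overline{\cM^\sigma}$ be a $K$-point; after a finite extension of $K$ I may assume the point lies in some $\cM^\tau \subset \overline{\cM^\sigma}$, i.e.\ it corresponds to a $[\tau]$-polarized K3 surface $(X,g)$ over $K$ with $g$ non-symplectic of prime order $p \geq 5$, so that $m(g) = p \neq 1,2,3,4,6$. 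The goal is to extend this to a morphism $\Spec \cO_K \to \overline{\cM^\sigma}$, i.e.\ to produce a limiting $[\tau']$-polarized K3 surface with the correct automorphism.

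First I would pass to a finite extension of $K$ so that $X$ admits a Kulikov model $\cX$ over $\cO_K$ (this is unconditional here: we are in characteristic $0$, so by Remark~\ref{rem:Kulikov} a Kulikov model exists after a finite base change). Then I apply Theorem~\ref{thm:main}(1): since $m(X) \geq m(g) = p \neq 1,2,3,4,6$, the special fiber $X_0$ is of Type~I, i.e.\ a smooth K3 surface. Thus $\cX \to \Spec \cO_K$ is a smooth proper family of K3 surfaces. Next I need the automorphism $g$ to extend over $\cO_K$: by the properness of the relative automorphism scheme (or by the fact that $\Aut$ of a polarized K3 family is finite over the base, combined with the Néron/separatedness properties of $\cX$), $g \in \Aut_K(X)$ extends to an automorphism $g_0$ of $\cX$, which restricts to a non-symplectic automorphism of the smooth K3 surface $X_0$. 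Comparing actions on the relative $2$-form, $\rho(g_0|_{X_0})$ is a primitive $p$-th root of unity, so $(X_0, g_0|_{X_0})$ is again a K3 surface with a non-symplectic automorphism of order $p$, hence defines a point of some $\cM^{\tau'}$.

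The remaining point is to check that this $\cM^{\tau'}$ actually appears as one of the strata of $\overline{\cM^\sigma}$, i.e.\ that the map $\Spec \cO_K \to \overline{\cM^\sigma}$ one obtains is a genuine morphism and that the closed point maps into $\overline{\cM^\sigma}$ rather than escaping. This follows from the construction of $\overline{\cM^\sigma}$ itself: on the analytic side $\overline{\cM^\sigma}(\bC) = \Gamma^\sigma \backslash D^\sigma$ is already compact as a complex space (being a Baily--Borel-type quotient of the ball by an arithmetic group is not automatic, but here the relevant statement is that $D^\sigma$ modulo the relevant finite-index subgroup is compact once we restrict to $p \geq 5$, which is exactly what Theorem~\ref{thm:main} secures — there is no boundary of Baily--Borel type to add because Type~II and Type~III degenerations are excluded). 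So the $K$-point of $\Gamma^\sigma\backslash(D^\sigma\setminus\Delta^\sigma)$ we started with, viewed in the compact space $\Gamma^\sigma\backslash D^\sigma$, has a unique limit, and the modular interpretation identifies that limit with the point given by $(X_0, g_0|_{X_0})$ together with the specialized marking. Hence $\Spec\cO_K \to \overline{\cM^\sigma}$ extends, and $\overline{\cM^\sigma}$ is proper.

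The main obstacle is the second step: deducing that the automorphism $g$ extends over $\cO_K$ and produces a well-defined point of the compactified moduli space, rather than of some a priori unrelated $\cM^{\tau'}$. This requires knowing that $\overline{\cM^\sigma}$ was constructed precisely so as to absorb all degenerations of $[\sigma]$-polarized K3 surfaces within the class of smooth K3 surfaces with order-$p$ non-symplectic automorphisms — i.e.\ that the $\cM^\tau$ glued on the boundary exhaust all possible limits. Given the setup in the paragraph preceding the proposition (where $\overline{\cM^\sigma}(\bC) = \Gamma^\sigma\backslash D^\sigma$ and the $\cM^\tau$ correspond exactly to the boundary points of this quotient), this is built into the definition, so the real content of the proof is genuinely just Theorem~\ref{thm:main}(1) plus the valuative criterion.
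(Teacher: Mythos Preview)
Your overall strategy matches the paper's: valuative criterion, invoke Theorem~\ref{thm:main}(1) to force the Kulikov model to be of Type~I, then extend the automorphism to the special fiber to identify the limit modularly. The genuine gap is the step where you assert that $g$ extends to an automorphism of $\cX$ ``by the properness of the relative automorphism scheme'' or by a polarized-family argument. Neither justification is available here. The Kulikov model $\cX$ is in general only an algebraic space, not projective over $\cO_K$, so there is no $g$-invariant relative polarization to appeal to; and for unpolarized K3 families the relative automorphism functor is not proper (indeed $\Aut$ of a K3 surface can be infinite). What one does have is that $g$ extends to a \emph{birational} self-map of $\cX$. The paper then cites \cite{Matsumoto:extendability}*{Proposition 2.2} to conclude that this rational map is defined outside a closed subspace of codimension $\geq 2$, so its restriction to the smooth K3 special fiber $X_0$ is a birational, hence biregular, automorphism $\tilde g_0$ with $\rho(\tilde g_0) = \zeta_p$. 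That result (or an equivalent one) is the missing ingredient in your argument.

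Your third paragraph is also somewhat circular. Asserting that $\Gamma^\sigma \backslash D^\sigma$ is ``already compact as a complex space'' is precisely the conclusion to be proved, not an input; the heuristic that Type~II/III degenerations correspond to Baily--Borel cusps, while morally correct, does not by itself constitute a proof. The paper instead argues directly: the period map (using the smooth family $\cX$) extends $\Spec \bC((t)) \to \cM^\sigma$ to $\Spec \bC[[t]] \to \overline{\cM^\sigma}$, and the extended automorphism $\tilde g_0$ on $X_0$ shows that the limit point lies in one of the strata $\cM^\tau \subset \overline{\cM^\sigma}$.
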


\begin{proof}
It suffices to show that any $K$-rational point, $K = \bC((t))$,
extends to an $\cO_K$-rational point, $\cO_K = \bC[[t]]$,
after replacing $K$ by a finite extension $\bC((t^{1/n}))$.
So let $(X,g) \in \cM(K)$ be a pair defined over $K$.
Then by Theorem \ref{thm:main} there exists, after extending $K$, a smooth proper algebraic space $\cX$ over $\cO_K = \bC[[t]]$ with K3 fibers,
and $g$ extends to a birational map $g \colon \cX \rationalto \cX$.
The period map gives an extension of the morphism $\Spec \bC((t)) \to \cM^\sigma$ to $\Spec \bC[[t]] \to \overline{\cM^\sigma}$.
By \cite{Matsumoto:extendability}*{Proposition 2.2}, 
$g \colon \cX \rationalto \cX$ is defined over the complement of a closed subspace of $\cX$ of codimension $\geq 2$,
and the birational map $g_0 := g \rvert_{X_0}$ extends to an automorphism $\tilde g_0$ of $X_0$.
This $\tilde g_0$ also satisfy $\rho(\tilde g_0) = \zeta_p$ 
but the action on $H^2$ is possibly different from $\sigma$.
\end{proof}

\begin{example}
Let $p = 11$.
By \cite{Artebani--Sarti--Taki}*{Theorem 7.3}
there are three non-conjugate isometries $\sigma_1, \sigma_2, \sigma_3 \in \Ortho(\LK3)$ of order $11$,
respectively with 
$S(\sigma_1) \isom U$,
$S(\sigma_2) \isom U(11)$,
$S(\sigma_3) \isom U \oplus A_{10}$,
and the geometric points of $\cM^{\sigma_1}$
consists of elliptic K3 surfaces
$y^2 = x^3 + ax + t^{11} - b$
with an automorphism $g(x,y,t) = (x,y,\zeta_{11}t)$,
parametrized by $\set{(a^3:b^2) \in \bP^1 \colon 4a^3 + 27b^2 \neq 0}$.
This elliptic surface has $1$ singular fiber of type $\rII$ at $t = \infty$
and $22$ of type $\rI_1$ at $4a^3 + 27(t^{11} - b)^2 = 0$,
unless $a = 0$, in which case it has $12$ singular fiber of type $\rII$.
(This is Kodaira's notation of singular fibers of elliptic surfaces, and is not related to the classification of Kulikov models.)
The boundary of the compactification $\overline {\cM^{\sigma_1}}$
consists of one point ($4a^3 + 27b^2 = 0$), and this is $\cM^{\sigma_3}$.
At this point the elliptic surface has $1$ singular fiber of type $\rII$, $11$ of type $\rI_1$, and one of type $\rI_{11}$.
\end{example}

The following is also a direct consequence of Theorem \ref{thm:main}.

\begin{prop} \label{prop:everywheregoodreduction}
Assume the existence of a Kulikov model (after field extension) for any K3 surface defined over a number field.
Take an isometry $\sigma$ with order $p \geq 5$.
Then any K3 surface corresponding to a point of $\cM^\sigma (\overline \bQ)$
has everywhere potential good reduction.
\end{prop}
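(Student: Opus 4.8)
The plan is to deduce the statement one finite place at a time from Theorem~\ref{thm:main}(1). A point of $\cM^\sigma(\overline\bQ)$ is represented by a pair $(X,g)$ defined over some number field $L$, with $g$ of order $p$ and $\rho(g) = \zeta_p$; note that $\zeta_p$ automatically lies in $L$, since $\rho(g) \in \GL(H^0(X,\Omega^2_{X/L})) = L^*$. Fix a finite place $v$ of $L$ and set $K = L_v$, a complete discretely valued field of characteristic $0$, so in particular $\charac K \neq 2$. After base change to $K$, $g$ induces an automorphism of $X_K$ over $K$ with $\rho(g) = \zeta_p$ still of order $p$, whence $p \mid m(X_K)$; since $p \geq 5$, this forces $m(X_K) \notin \set{1,2,3,4,6}$.

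Next I would invoke the standing hypothesis of the proposition: after replacing $K$ by a finite extension $K'$, the K3 surface $X_{K'}$ admits a Kulikov model $\cX$ over $\cO_{K'}$. The extension preserves $g$ and the order of $\rho(g)$, so $p \mid m(X_{K'})$ and hence $m(X_{K'}) \notin \set{1,2,3,4,6}$. Theorem~\ref{thm:main}(1), applicable because $\charac K' = 0 \neq 2$, then shows that the special fiber $X_0$ of $\cX$ is of Type~I, i.e.\ a smooth K3 surface. Thus $\cX \to \Spec \cO_{K'}$ is a smooth proper model of $X_{K'}$, so $X$ acquires good reduction at $v$ after a finite extension of $L$. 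As $v$ ranges over the finite places of $L$, this yields everywhere potential good reduction.

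I do not expect a genuine obstacle: essentially all the content is packaged into Theorem~\ref{thm:main}(1), and the remainder is bookkeeping. Two small points merit a sentence of care. First, one should check that the passage from $L$ to the local field $K$, and then to $K'$, preserves the automorphism $g$ and the value $\rho(g)$, so that the hypothesis $m \notin \set{1,2,3,4,6}$ of Theorem~\ref{thm:main} persists throughout; this is immediate. Second, the model $\cX$ produced by the Kulikov machinery is a priori only a proper smooth algebraic space, so ``good reduction'' is to be read as allowing algebraic-space models, consistently with the rest of the paper (in fact such an $\cX$ is a scheme, as one sees by spreading out from its special fiber, but this is not needed). It is also worth remarking that the hypothesis of the proposition genuinely intervenes at \emph{every} finite place: at a finite place of a number field the residue field has positive characteristic, where the existence of a semistable, hence Kulikov, model is still only conjectural, whereas in residue characteristic $0$ it would be unconditional.
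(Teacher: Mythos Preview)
Your argument is correct and is exactly the ``direct consequence of Theorem~\ref{thm:main}'' that the paper has in mind; the paper gives no further proof. One caveat on your parenthetical: it is not true in general that a smooth proper algebraic-space model of a K3 surface over a DVR is a scheme (there are known examples where the Type~I Kulikov model is a non-scheme algebraic space), but as you already note, this is irrelevant to the argument.
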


Previously the author proved (conditionally) everywhere potential good reduction of
K3 surfaces with complex multiplications \cite{Matsumoto:goodreductionK3}*{Theorem 6.3},
which however gives only isolated examples.
To the contrary, Proposition \ref{prop:everywheregoodreduction}
can be applied to positive dimensional family:
for $p = 5,7,11$ there exists $\sigma \in \Ortho(\LK3)$ of order $p$ 
with $\dim \cM^\sigma = 4,2,1$ respectively.

\begin{example}
Again consider a K3 surface of the form $y^2 = x^3 + ax + t^{11} - b$,
this time over defined a number field $K$.
Then by the previous proposition it has potential good reduction at any prime of $K$.
We can also show potential good reduction directly.
If the residue characteristic is equal to $11$ then this is done in \cite{Matsumoto:extendability}*{Example 6.8}.
In other characteristics we proceed as follows.
After extending $K$, we can find $a_i \in \cO_K$ such that 
\[
\cE = (F(x',y') = y'^2 + a_1 x'y' + a_3 y' + x'^3 + a_2 x'^2 + a_4 x' + a_6 = 0) \subset \bP^2_{\cO_K}
\]
is a minimal Weierstrass model over $\cO_K$ of the elliptic curve $(y^2 = x^3 + ax - b)$,
and that the special fiber $E_0 = \cE \otimes_{\cO_K} k$ of $\cE$ is either an elliptic curve or a nodal curve.
Then the special fiber of 
\[
\cX = (F(x',y') + t^{11} = 0) 
\]
is smooth or has one $A_{10}$ singularity, according to $E_0$ being smooth or nodal respectively.
Applying Artin's simultaneous resolution (after extending $K$) we achieve good reduction.
\end{example}

\section{Examples}

\subsection{Theorems \ref{thm:main} and \ref{thm:E-version} are optimal}

The following two examples show that we cannot weaken the assumptions on $m$ and $E$ in Theorems 
\ref{thm:main} and \ref{thm:E-version}.

\begin{example}[Type III] \label{ex:Type III}
Assume $\charac k \neq 2$.
Consider the family $\cX'$ over $\cO_K$ of quartic surfaces given by 
$t(w^4 + x^4 + y^4 + z^4) + wxyz = 0$, 
where $t$ is a uniformizer of $\cO_K$.
This is not a Kulikov model, since it has non-regular points,
but we can perform small blow-ups to obtain a Kulikov model $\cX$.
(For example, we can resolve the singularity at $t = w = x = y^4 + z^4$
by blowing-up either the ideal $(t,w)$ or $(t,x)$ at a neighborhood.)
Then the special fiber is of Type III (whose dual graph is a tetrahedron).
The symmetric group $\fS_4$ acts on $X$ (over $K$) naturally 
and its action on the $2$-forms is given by $\sign: \fS_4 \surjto \{ \pm 1 \}$.
Hence $m(X)$ is divisible by $2$
(and by Theorem \ref{thm:main} we have $m(X) = 2$).
\end{example}

\begin{example}[Type II] \label{ex:Type II}
Again assume $\charac k \neq 2$.
Let $E$ be an imaginary quadratic field.
Let $C_1$ be an elliptic curve with complex multiplication by an order of $E$,
and $C_2$ an elliptic curve with multiplicative reduction.
Let $\cC_i$ be the minimal regular models of $C_i$ over $\cO_K$.
By extending $K$, we may assume that $(C_1)_0$ is smooth and that $(C_2)_0$ has an even number of components.
Let $\iota$ be the multiplication-by-$(-1)$ map on $\cA = \cC_1 \times_{\cO_K} \cC_2$
(then $\Fix(\iota)$ is finite \'etale over $\cO_K$),
and $\cX$ the blow-up of $\cA / \iota$ at the image of $\Fix(\iota)$.
Then $\cX$ is a Kulikov model of $X = \Km(C_1 \times C_2)$ with Type II degeneration,
$T(X_\bC)_\bQ \cong H^1((C_1)_\bC,\bQ) \otimes H^1((C_2)_\bC,\bQ)$ and
$\End_{\HS} T(X)_\bQ \supset E$
(and by Theorem \ref{thm:E-version} we have $\End_{\HS} T(X)_\bQ = E$).

In particular, if we take $C_1$ to be the elliptic curve with an automorphism of order $4$ (resp.\ $6$),
then we have an example of Type II degeneration with $m(X)$ divisible by $4$ (resp.\ $6$).
\end{example}

\subsection{Some automorphisms of K3 surfaces of finite order in positive characteristic}

The following collection of examples shows that in general
$e$ (and $e_i$) in Lemma \ref{lem:chi:p}(2),(3) may be nonzero.

\begin{example} \label{ex:nonzero}
Considering the obvious degree constraint ($\deg \Phi_{p^e} \leq 22$),
$p^e$ with $e \neq 0$ can occur 
in Lemma \ref{lem:chi:p}(2),(3) 
only if $p^e$ belongs to the set
\[ \{ 2^e \; (e \leq 5), \; 3^e \; (e \leq 3), \; 5^e \; (e \leq 2), 7, 11, 13, 17, 19 \} .\]
We give examples for $p^e = 2, 2^2, 3, 5, 7, 11$. All examples are automorphisms of finite order.
For the other cases we do not know whether examples exist.

Let $p^e$ be one of $2,2^2,3,5,7,11$.
Define an integer $n$ as in the table below. 
Let $K = \bQ_p(\zeta_{p^e n})$ if $e = 1$
and $K = \bQ_p(\zeta_{3 p^e n})$ if $p^e = 2^2$.
If $p^e = 2,2^2,3,7,11$, 
let $\cX$ (over $\cO_K$) be the elliptic K3 surface defined by the equation below
(with the $E_8$ singularity at $t = \infty$ resolved in the standard way if $p^e = 2,2^2,7$).
If $p^e = 5$,
let $\cX$ (over $\cO_K$) be the blow-up of the double sextic surface defined by the equation below
at the non-smooth locus $(w = x = G_5(y) = 0)$.
Define $g_1,g_2 \in \Aut(\cX)$ as below.

\begin{tabular}{llllll}
\toprule
$p^e$ & $n$ & equation & $g_1(x,y,t)$ & $g_2(x,y,t)$ & char.poly. 
\\ 
\midrule
$2$ & $21$ & 
$G_2(y) = x^3 + t^7 $ &
$(x, -y + 1, t)$ &
$(\zeta_{21}^{7} x, y, \zeta_{21}^{15} t)$ &
$\Phi_1^{10} \Phi_{42}$
\\
$2^2$ & $7$ & 
$H(x,y) + t^7 = 0$ &
$(h(x,y), -t)$ &
$(x, y, \zeta_{7} t)$ &
$\Phi_1^{10} \Phi_{28}$
\\
$3$ & $22$ & 
$y^2 = G_3(x) + t^{11}$ &
$(\zeta_3 x + 1, y, t)$ &
$(x, -y, \zeta_{22}^{12} t)$ & 
$\Phi_1^2 \Phi_{66}$
\\
$5$ & $8$ & 
$w^2 = x (x^4 + G_5(y))$ &
$(x, \zeta_5 y + 1, w)$ &
$(\zeta_8^2 x, y, \zeta_8 w)$ &
$\Phi_1^2 \Phi_5 \Phi_{40}$
\\
$7$ & $6$ & 
$y^2 = x^3 + G_7(t)$ & 
$(x, y, \zeta_7 t + 1)$ &
$(\zeta_6^4 x, - y, t)$ & 
$\Phi_1^{10} \Phi_{42}$
\\
$11$ & $1$ &
$y^2 = x^3 + x^2 + G_{11}(t)$ &
$(x, y, \zeta_{11} t + 1)$ &
$\id$ &
$\Phi_1^{2} \Phi_{11}^2$
\\
\bottomrule
\end{tabular}

Here $G_p$ and $H$ are defined as follows:
\begin{itemize}
\item $G_p(z) = \prod_{i = 0}^{p-1} (z - a_i) \in \bZ_p[\zeta_p][z]$
with $a_i = (\zeta_p^i - 1) / (\zeta_p - 1)$.
It satisfies $G_p(\zeta_p z + 1) = G_p(z)$
and $G_p(z) \equiv z^p - z \pmod{(\zeta_p - 1)}$ (since $a_i \equiv i$).
\item
$H(x,y) = y^2 + a_1 xy + a_3 y + x^3 + a_2 x^2 + a_4 x + a_6 = 0$
and $(x,y) \mapsto h(x,y) = (- x + b_2, \zeta_4^{-1} y + b_1 x + b_3)$,
$a_i,b_i \in \cO_K$, 
are equations of the N\'eron model of an elliptic curve with an automorphism acting on the $1$-forms by $\zeta_4^{-1}$.
(For example, one can take 
$a_1 = 3 - \sqrt{3}$, 
$a_3 = 2 - \sqrt{3}$,
$a_2 = a_4 = a_6 = 0$,
$b_2 =  - (2 - \sqrt{3})$,
$b_1 = \zeta_{12}(1 - \zeta_{12}) \sqrt{3}$,
$b_3 = \zeta_{12}^{-1}(\zeta_{12}-1)^3$,
with $\sqrt{3} = \zeta_{12} + \zeta_{12}^{-1}$.)
\end{itemize}

Then 
$\cX$ is smooth proper over $\cO_K$ with K3 fibers,
$g_1,g_2 \in \Aut(\cX)$ commute
and are of orders $p^e$ and $n$ respectively, 
and $\rho(g_1 \rvert_{X_K}) = \zeta_{p^e}$ and $\rho(g_2 \rvert_{X_K}) = \zeta_{n}$.
Hence $g := g_1 g_2$ is of order $p^e n$, and $m(g \rvert_{X_K}) = p^e n$, $m(g \rvert_{X_0}) = n$.
Therefore $g$ acts on $T_l(X_K)$ by a power of $\Phi_{p^e n}$,
hence also on $T_l(X_0)$.

We can moreover determine the characteristic polynomial of $g$ on $\Het^2$ completely (although we do not need this).
For $p^e = 2,4,3,7$,
one observes that the sublattice of $\NS(X_K)$
generated by the zero section and the components of the singular fibers
already has rank $22 - \phi(p^e n)$,
and hence $\Het^2$ of $X_K$ is generated up to torsion by these curves and $T_l(X_K)$.
Since the action of $g$ on the classes of those curves are trivial, we obtain the characteristic polynomial $\Phi_1^{22 - \phi(p^e n)} \Phi_{p^e n}$.
For $p^e = 5$,
similarly $\Het^2$ is generated by the pullback of $\cO_{\bP^2}(1)$ and the five exceptional curves and $T_l(X_K)$.
Since $g_1$ and $g_2$ respectively act on the classes of the exceptional curves transitively and trivially,
we obtain the characteristic polynomial $\Phi_1^2 \Phi_5 \Phi_{40}$.
Finally, for $p^e = 11$, it is proved by by Dolgachev--Keum \cite{Dolgachev--Keum:order11}*{Lemma 2.3(i)}
that the characteristic polynomial of an order $11$ automorphism of a K3 surface in characteristic $11$ 
is always $\Phi_1^{2} \Phi_{11}^2$.

It remains to show that $X_0$ is not supersingular.
For the case $p^e = 11$ this is checked in \cite{Schutt:order11}*{Section 3.2}.
Assume $p^e \neq 11$.
By \cite{Nygaard:higherdeRham-Witt}*{Theorem 2.1} (see Remark \ref{rem:Nygaard p=2}),
if $g$ is an automorphism of a supersingular K3 surface $Y$ then $m(g)$ should divide $p^{\sigma_0} + 1$,
where $\sigma_0$ is the Artin invariant of $Y$ (which is a positive integer $\leq 10$).
For the cases $p^e \neq 11$
we observe that no such integer $\sigma_0$ exist,
and hence $X_0$ are not supersingular.
As the reader might have noticed,
the generic fibers of $\cX$ for $p^e = 2, 2^2, 3, 5, 7$ are the
well-known examples of automorphisms $g$ of characteristic zero K3 surfaces with $\ord(g) = \ord(\rho(g)) = 42,28,66,40,42$,
given by Kondo \cite{Kondo:trivially}*{Section 3} (order $28,42,66$) and Machida--Oguiso \cite{Machida--Oguiso}*{Proposition 4 (15)} (order $40$).
(To check this use the equality
$G_p(z) = (\zeta_p - 1)^{-p} (z'^p - 1)$
where $z' = (\zeta_p - 1) z + 1$.)
\end{example}

\begin{example}
Keum \cite{Keum:orders} classified possible finite orders of automorphisms of K3 surfaces over each characteristic $\neq 2,3$.
Since the problem is still open for characteristic $2$ and $3$,
we note some examples in these characteristics, 
although these might be known to experts.

Case $p^e = 2^2$ in the previous example gives an automorphism of order $28$ of a K3 surface in characteristic $2$.
By replacing $7$ with $9$ and $11$ we also obtain automorphisms of order $36$ and $44$.

Case $p^e = 2$ gives order $42$ in characteristic $2$, and by replacing $7$ with $11$ we obtain order $66$.
However these two examples are almost written in \cite{Keum:orders}*{Example 3.6}.

Case $p^e = 3$ gives an automorphism of order $66$ of a K3 surface in characteristic $3$.
By replacing $11$ with $7$, $8$ and $10$ (and then resolving the singularity at $t = \infty$ in the standard way) 
we also obtain automorphisms of order $42$, $48$ and $60$.
\end{example}

\begin{bibdiv}
\begin{biblist}
\bib{Artebani--Sarti--Taki}{article}{
  author={Artebani, Michela},
  author={Sarti, Alessandra},
  author={Taki, Shingo},
  title={$K3$ surfaces with non-symplectic automorphisms of prime order},
  note={With an appendix by Shigeyuki Kond\=o},
  journal={Math. Z.},
  volume={268},
  date={2011},
  number={1-2},
  pages={507--533},
  issn={0025-5874},
}

\bib{deJong:alteration}{article}{
  author={de Jong, A. J.},
  title={Smoothness, semi-stability and alterations},
  journal={Inst. Hautes \'Etudes Sci. Publ. Math.},
  number={83},
  date={1996},
  pages={51--93},
  issn={0073-8301},
}

\bib{Deligne:relevement}{article}{
  author={Deligne, P.},
  title={Rel\`evement des surfaces $K3$ en caract\'eristique nulle},
  language={French},
  note={Prepared for publication by Luc Illusie},
  conference={ title={Algebraic surfaces}, address={Orsay}, date={1976--78}, },
  book={ series={Lecture Notes in Math.}, volume={868}, publisher={Springer}, place={Berlin}, },
  date={1981},
  pages={58--79},
}

\bib{Dolgachev--Keum:order11}{article}{
  author={Dolgachev, Igor V.},
  author={Keum, JongHae},
  title={$K3$ surfaces with a symplectic automorphism of order 11},
  journal={J. Eur. Math. Soc. (JEMS)},
  volume={11},
  date={2009},
  number={4},
  pages={799--818},
  issn={1435-9855},
}

\bib{Illusie:report}{article}{
  author={Illusie, Luc},
  title={Report on crystalline cohomology},
  conference={ title={Algebraic geometry}, address={Proc. Sympos. Pure Math., Vol. 29, Humboldt State Univ., Arcata, Calif.}, date={1974}, },
  book={ publisher={Amer. Math. Soc., Providence, R.I.}, },
  date={1975},
  pages={459--478},
}

\bib{Ito:weight-monodromy-equal}{article}{
  author={Ito, Tetsushi},
  title={Weight-monodromy conjecture over equal characteristic local fields},
  journal={Amer. J. Math.},
  volume={127},
  date={2005},
  number={3},
  pages={647--658},
  issn={0002-9327},
}

\bib{Jang:lifting}{article}{
  author={Jang, Junmyeong},
  title={A lifting of an automorphism of a K3 surface over odd characteristic},
  journal={Int. Math. Res. Notices},
  date={2016},
}

\bib{Kawamata:mixed3fold}{article}{
  author={Kawamata, Yujiro},
  title={Semistable minimal models of threefolds in positive or mixed characteristic},
  journal={J. Algebraic Geom.},
  volume={3},
  date={1994},
  number={3},
  pages={463--491},
  issn={1056-3911},
}

\bib{Keum:orders}{article}{
  author={Keum, JongHae},
  title={Orders of automorphisms of K3 surfaces},
  journal={Adv. Math.},
  volume={303},
  date={2016},
  pages={39--87},
  issn={0001-8708},
}

\bib{Kleiman:weilconjectures}{article}{
  author={Kleiman, S. L.},
  title={Algebraic cycles and the Weil conjectures},
  conference={ title={Dix expos\'es sur la cohomologie des sch\'emas}, },
  book={ series={Adv. Stud. Pure Math.}, volume={3}, publisher={North-Holland, Amsterdam}, },
  date={1968},
  pages={359--386},
}

\bib{Kondo:trivially}{article}{
  author={Kond{\=o}, Shigeyuki},
  title={Automorphisms of algebraic $K3$ surfaces which act trivially on Picard groups},
  journal={J. Math. Soc. Japan},
  volume={44},
  date={1992},
  number={1},
  pages={75--98},
  issn={0025-5645},
}

\bib{Kulikov:degeneration}{article}{
  author={Kulikov, Viktor S.},
  title={Degenerations of $K3$ surfaces and Enriques surfaces},
  language={Russian},
  journal={Izv. Akad. Nauk SSSR Ser. Mat.},
  volume={41},
  date={1977},
  number={5},
  pages={1008--1042, 1199},
}

\bib{Lieblich--Maulik:cone}{article}{
  author={Lieblich, Max},
  author={Maulik, Davesh},
  title={A note on the cone conjecture for K3 surfaces in positive characteristic},
  journal={Math. Res. Lett.},
  volume={25},
  date={2018},
  number={6},
  pages={1879--1891},
  issn={1073-2780},
}

\bib{Machida--Oguiso}{article}{
  author={Machida, Natsumi},
  author={Oguiso, Keiji},
  title={On $K3$ surfaces admitting finite non-symplectic group actions},
  journal={J. Math. Sci. Univ. Tokyo},
  volume={5},
  date={1998},
  number={2},
  pages={273--297},
  issn={1340-5705},
}

\bib{Matsumoto:goodreductionK3}{article}{
  author={Matsumoto, Yuya},
  title={Good reduction criterion for K3 surfaces},
  journal={Math. Z.},
  volume={279},
  date={2015},
  number={1--2},
  pages={241--266},
  issn={0025-5874},
}

\bib{Matsumoto:extendability}{article}{
  author={Matsumoto, Yuya},
  title={Extendability of automorphisms of K3 surfaces},
  year={2021},
  eprint={https://arxiv.org/abs/1611.02092v2},
  journal={Math. Res. Lett.},
  status={to appear},
}

\bib{Nakayama:degeneration}{article}{
  author={Nakayama, Chikara},
  title={Degeneration of $l$-adic weight spectral sequences},
  journal={Amer. J. Math.},
  volume={122},
  date={2000},
  number={4},
  pages={721--733},
  issn={0002-9327},
  label={Naka00},
}

\bib{Nakkajima:logk3}{article}{
  author={Nakkajima, Yukiyoshi},
  title={Liftings of simple normal crossing log $K3$ and log Enriques surfaces in mixed characteristics},
  journal={J. Algebraic Geom.},
  volume={9},
  date={2000},
  number={2},
  pages={355--393},
  issn={1056-3911},
  label={Nakk00},
}

\bib{Nikulin:factorgroups}{article}{
  author={Nikulin, V. V.},
  title={Factor groups of groups of automorphisms of hyperbolic forms with respect to subgroups generated by $2$-reflections. Algebrogeometric applications},
  language={Russian},
  conference={ title={Current problems in mathematics, Vol. 18}, },
  book={ publisher={Akad. Nauk SSSR, Vsesoyuz. Inst. Nauchn. i Tekhn. Informatsii, Moscow}, },
  date={1981},
  pages={3--114},
  note={English translation: J. Soviet Math. {\bf 22} (1983), no. 4, 1401--1475.},
}

\bib{Nygaard:higherdeRham-Witt}{article}{
  author={Nygaard, Niels O.},
  title={Higher de Rham-Witt complexes of supersingular $K3$\^^Msurfaces},
  journal={Compositio Math.},
  volume={42},
  date={1980/81},
  number={2},
  pages={245--271},
  issn={0010-437X},
}

\bib{Ogus:K3crystals}{article}{
  author={Ogus, Arthur},
  title={Supersingular $K3$ crystals},
  conference={ title={Journ\'ees de G\'eom\'etrie Alg\'ebrique de Rennes}, address={Rennes}, date={1978}, },
  book={ series={Ast\'erisque}, volume={64}, publisher={Soc. Math. France, Paris}, },
  date={1979},
  pages={3--86},
}

\bib{RamonMari:Hodgeconjecture}{article}{
  author={Ram{\'o}n Mar{\'{\i }}, Jos{\'e} J.},
  title={On the Hodge conjecture for products of certain surfaces},
  journal={Collect. Math.},
  volume={59},
  date={2008},
  number={1},
  pages={1--26},
  issn={0010-0757},
}

\bib{Rapoport--Zink:monodromie}{article}{
  author={Rapoport, Michael},
  author={Zink, Thomas},
  title={\"Uber die lokale Zetafunktion von Shimuravariet\"aten. Monodromiefiltration und verschwindende Zyklen in ungleicher Charakteristik},
  language={German},
  journal={Invent. Math.},
  volume={68},
  date={1982},
  number={1},
  pages={21--101},
  issn={0020-9910},
}

\bib{Rudakov--Zink--Shafarevich}{article}{
  author={Rudakov, A. N.},
  author={Zink, T.},
  author={Shafarevich, I. R.},
  title={The influence of height on degenerations of algebraic surfaces of type $K3$},
  language={Russian},
  journal={Izv. Akad. Nauk SSSR Ser. Mat.},
  volume={46},
  date={1982},
  number={1},
  pages={117--134, 192},
  issn={0373-2436},
  note={English translation: Math. USSR-Izv. {\bf 20} (1982), no. 1, 119--135 (1983).},
}

\bib{Saito:weightSS}{article}{
  author={Saito, Takeshi},
  title={Weight spectral sequences and independence of $l$},
  journal={J. Inst. Math. Jussieu},
  volume={2},
  date={2003},
  number={4},
  pages={583--634},
  issn={1474-7480},
}

\bib{Schutt:order11}{article}{
  author={Sch\"utt, Matthias},
  title={K3 surfaces with an automorphism of order 11},
  journal={Tohoku Math. J. (2)},
  volume={65},
  date={2013},
  number={4},
  pages={515--522},
  issn={0040-8735},
}

\bib{Schutt:dynamicsssK3}{article}{
  author={Sch\"utt, Matthias},
  title={Dynamics on supersingular K3 surfaces},
  journal={Comment. Math. Helv.},
  volume={91},
  date={2016},
  number={4},
  pages={705--719},
  issn={0010-2571},
}

\bib{Ueno:classification}{book}{
  author={Ueno, Kenji},
  title={Classification theory of algebraic varieties and compact complex spaces},
  series={Lecture Notes in Mathematics, Vol. 439},
  note={Notes written in collaboration with P. Cherenack},
  publisher={Springer-Verlag, Berlin-New York},
  date={1975},
  pages={xix+278},
}

\bib{Zarhin:HodgegroupsK3}{article}{
  author={Zarhin, Yu. G.},
  title={Hodge groups of $K3$ surfaces},
  journal={J. Reine Angew. Math.},
  volume={341},
  date={1983},
  pages={193--220},
}

%\bibselect{myrefs}
\end{biblist}
\end{bibdiv}
\end{document}